\def\figurename{Figure} 
\renewcommand{\fnum@figure}[1]{\figurename~\thefigure.}
\def\tablename{Table} 
\renewcommand{\fnum@table}[1]{\tablename~\thetable.}
\newtheorem{theorem}{Theorem}[section]
\newtheorem{lemma}[theorem]{Lemma}
\newtheorem{corollary}[theorem]{Corollary}
\theoremstyle{definition}
\theoremstyle{remark}
\newtheorem{remark}[theorem]{Remark}
\numberwithin{equation}{section}
\def\P{\mathbb P}
\def\R{\mathbb R}
\def\E{\mathbb E}
\def\L{\mathbb L}
\def\E{\mathbb E}
\begin{document}
\title{\bf{A new type of reflected backward doubly stochastic differential equations}
\thanks{The work of Auguste Aman is support by TWAS Research Grants to individuals (No. 09-100 RG/MATHS/AF/AC-I-
UNESCO FR: 3240230311) and it been dedicated to all the dead of the post-election crisis in his country.\newline The work of Yong Ren is supported by the National Natural Science Foundation of  China (No 10901003), the Key
 Project of Chinese Ministry of Education (No 211077) and the Anhui Provincial Natural
 Science Foundation (No 10040606Q30).}}
\author{Auguste Aman$^1$\thanks{augusteaman5@yahoo.fr, corresponding author}\ \ and \ Yong Ren $^2$
\thanks{renyong@126.com and brightry@hotmail.com} \\
{\sl 1. U.F.R Math\'{e}matiques et Informatique, Universit\'{e} de Cocody, }\\{\it 582 Abidjan 22, C\^{o}te d'Ivoire}\\
{\sl 2. Department of Mathematics, Anhui Normal University,\;\;\;\;\;\;\;\;\;\:}\\ {\it Wuhu
241000, China}}

\date{}
\maketitle

\begin{abstract}
In this paper, we introduce a new kind of {\it "variant"} reflected backward doubly
stochastic differential equations (VRBDSDEs in short), where the drift is the nonlinear function of the barrier process. In the one stochastic case, this type of equations have been already studied by Ma and Wang \cite{MW}. They called it as {\it "variant"} reflected BSDEs (VRBSDEs in short) based on the general version of the Skorohod problem recently studied by Bank and El Karoui \cite{BK}. Among others, Ma and Wang \cite{MW} showed that VRBSDEs is a novel tool for some problems in finance and optimal stopping problems where no existing methods can be easily applicable. Since more of those models have their stochastic counterpart, it is very useful to transpose the work of Ma and Wang \cite{MW} to {\it doubly stochastic} version. In doing so, we firstly establish the stochastic variant Skorohod problem based on the stochastic representation theorem, which extends the work of Bank and El Karoui \cite{BK}.
We prove the existence and uniqueness of the solution for VRBDSDEs by means of the contraction mapping theorem.
By the way, we show the comparison theorem and stability result for the solutions of VRBDSDEs.
\end{abstract}

\noindent {\bf AMS Subject Classification:} 60H15; 60H20

\vspace{.08in} \noindent \textbf{Keywords}: Reflected backward doubly stochastic differential equation, stochastic Skorohod problem, stochastic representation theorem.

\section{Introduction}
\setcounter{theorem}{0} \setcounter{equation}{0}
The theory of backward stochastic differential equations (BSDEs in
short) was developed by Pardoux and Peng \cite{PP90}. Precisely,
given a data $(\xi,f)$\ consisting of a progressively measurable
process $f$, so-called the generator, and a square integrable random
variable $\xi$, they proved the existence and uniqueness of an
adapted process $(Y,Z)$ solution to the following BSDEs:
$$Y_t=\xi+\int_t^T f(s,Y_s,Z_s)\,{\rm d}s-\int_t^TZ_s\,{\rm d}B_s,\quad 0\leq t\leq T.$$
These equations have attracted great interest due to their
connections with mathematical finance \cite{EPeQ,EQ}, stochastic
control and stochastic games \cite{EH,HL1,HL2}. Furthermore, it was
shown in various papers that BSDEs give the probabilistic
representation for the solution (at least in the viscosity sense) of
a large class of systems of semi-linear parabolic partial
differential equations (PDEs in short) \cite{Pa98,Pa99,PP92,Pe91}.

Further, other settings of BSDEs have been proposed. Especially, El-Karoui et
al. \cite{EKPPQ} have introduced the notion of reflected BSDEs
(RBSDEs in short), which is a BSDE but the solution is forced to
stay above a lower barrier. In details, a solution of such equations
is a triple of processes $(Y,Z,K)$ satisfying that
\begin{equation}\label{equation1}
Y_t=\xi+\int_t^Tf(s,Y_s,Z_s)\,{\rm d}s+K_T-K_t-\int_t^TZ_s\,{\rm d}B_s,\ Y_t\geq S_t,
\end{equation}
where $S$, so-called the barrier, is a given stochastic process. The
role of the continuous increasing process $K$ is to push the state
process upward with the minimal energy, in order to keep it above
$S$; in this sense, it satisfies $\int_0^T(Y_t-S_t)\,{\rm d}K_t=0.$
RBSDEs have been proven to be powerful tools in mathematical finance
\cite{CM,H}, the mixed game problems \cite{CK,HL3}, providing a
probabilistic formula for the viscosity solution of an obstacle
problem for a class of parabolic PDEs (\cite{CM1,EKPPQ,RX}) and so
on. On other interesting results on RBSDEs driven by a Brownian
motion with different barrier conditions, one can see Hamad\`ene
\cite{H1}, Lepeltier and Xu \cite{LX} and Peng and Xu \cite{PX}.

Very recently, Ma and Wang \cite{MW} introduced the so-called {\it Variant Reflected Backward
Stochastic Differential Equations} (VRBSDEs in short)
associated with the notion of variant Skorohod problem studied by Bank and El Karoui \cite{BK}, that is
\begin{equation}\label{equation2}
 Y_t=E\left\{X_T+\int_t^Tf(s,Y_s,A_s)ds|\mathcal{F}_t\right\}, 0\leq t\leq T,
 \end{equation}
where $X=\{X_t\}_{t\geq 0}$ is an optional process of class (D) and the solution $(Y,A)$ satisfies that
\begin{enumerate}
  \item [\rm(i)] $Y_t\leq X_t, 0\leq t\leq T, Y_T=X_T;$
  \item [\rm(ii)] $A=\{A_t\}$ is an adapted, increasing process such that $A_{0-}=-\infty$, and
  the following flat-off condition holds
  \begin{equation}\label{equation3} E\int_0^T|Y_t-X_t|dA_t=0.\end{equation}
\end{enumerate}
In addition, if the filtration $\mathcal{F}$ is generated by a Brownian motion $B$, then
(\ref{equation2}) has the following extension form
\begin{equation}\label{equation4} dY_t=-f(t,Y_t,Z_t,A_t)dt+Z_tdB_t, \ Y_t\leq X_t, 0\leq t \leq T, \ Y_T=X_T.\end{equation}
Unlike the role of $K$ in (\ref{equation1}) as stated previously, the process
$A$ here could be regarded as a density of a reflecting force, which acts through the
drift $f$ in a nonlinear manner. From the above statements, we can see that there is great
difference between the frameworks of RBSDEs  and VRBSDEs. Also,
even the fundamental well-posedness property of the VRBSDE cannot
be obtained by means of
the usual ways used in BSDE and RBSDE. This brand new kind of BSDEs has some important applications in finance
and optimal stopping problems (\cite{MW}).

In \cite{PardPeng}, Pardoux and Peng  proposed
another class of BSDEs, named backward doubly stochastic differential equations (BDSDEs in short) with the form:
\begin{equation}\label{equation5} Y_t=\xi+\int_t^Tf(s,Y_s,Z_s)ds+\int_t^Tg(s,Y_s,Z_s)dB_s
-\int_t^TZ_tdW_t,  0\leq t \leq T,\end{equation}
where the integral with respect to $\{B_t\}$ is a backward It\^{o} integral and
the integral with respect to $\{W_t\}$ is a standard forward It\^{o} integral. Those two types of integrals are particular cases of the Itô-Skorohod integral, see Nualart and Pardoux \cite{NP}.
Following it, some well-known works have been done in the probabilistic representation of certain quasi-linear
 stochastic partial differential equations by means of BDSDEs from different aspects, one can see
Bally and Matoussi \cite{MBa}, Boufoussi et al. \cite{B1,B2}, Buckdahn and Ma \cite{BM1,BM2,BM3}, Matoussi
and Scheutzow \cite{MS}, Zhang and
Zhao \cite{Zhang} and the references therein. Based on the reflected framework of El-Karoui et
al. \cite{EKPPQ}, Bahlali et al. \cite{bah}, Aman \cite{Aman} and Ren \cite{ren} respectively proved the
existence and uniqueness of the solution for a class of reflected BDSDEs (RBDSDEs in short) driven by Brownian motions and L\'{e}vy
processes. Especially, very recently, Matoussi and Stoica \cite{MS2} proved the existence and uniqueness result for the obstacle problem of quasi-linear parabolic stochastic PDEs by means of the
RBDSDEs.

Motivated by the aforementioned works, in this paper, we study a
class of variant reflected backward doubly stochastic differential equations (VRBDSDEs in short). In doing so, we firstly establish the stochastic variant Skorohod problem based on the stochastic representation theorem, which extends the work of Bank and El Karoui \cite{BK}.
We prove the existence and uniqueness of the solution for VRBDSDEs by means of the contraction mapping theorem.
In addition, we show the comparison theorem and the stability result for the solutions of VRBDSDEs.

Let us describe our plan. First, the formulation of the problems is proposed in Section 2. The main results are presented in Section 3.

\section{Formulation of the problems}
\setcounter{theorem}{0} \setcounter{equation}{0}
Let $(\Omega, \mathcal{F},\P)$ be a probability space and $T>0$ be fixed throughout this paper.  Let $\{W_t,\, 0\leq t\leq T\}$ and $\{B_t,\, 0\leq t\leq T\}$ be two mutually independent standard Brownian motion processes, with values respectively in $\R^d$ and in $\R^{\ell}$, define on $(\Omega, \mathcal{F},\P)$. Let $\mathcal{N}$ denote
the class of $\P$-null sets of $\mathcal{F}$. For each $t \in
[0,T]$, let us define
$$\mathcal{F}_{t}=\mathcal{F}_{t}^{W} \vee \mathcal{F}_{t,T}^{B},$$ where for any process $\{\eta_{t}; 0\leq t\leq T\},\; \mathcal{F}_{s,t}^{\eta}=\sigma
\{\eta_{r}-\eta_{s}; s\leq r \leq t \} \vee \mathcal{N}$ and, $\mathcal{F}_{t}^{\eta}=\mathcal{F}_{0,t}^{\eta}$.
\\
Knowing that $\{\mathcal{F}_{t}^W, t\in [0,T]\}$ is an increasing filtration and $\{\mathcal{F}_{t,T}^B, t\in [0,T]\}$ is a decreasing filtration, the collection $\{\mathcal{F}_{t}, t\in[0,T]\}$ is neither increasing nor decreasing so it does not constitute a filtration.

Let us describe the following spaces will frequently used in the sequel.
\begin{description}
\item $\bullet$ For any $n \in \mathbb{N}$,
$\mathcal{M}^{2}(0,T,\mathbb{R}^{n})$ denotes the set of (class of
$d\P\otimes dt$ a.e.) $n$-dimensional jointly measurable
random processes  $\{\varphi_{t}; 0\leq t\leq T \}$  such that
$$
\|\varphi \|_{\mathcal{M}^{2}}^{2}=\mathbb{E}\left(\int_{0}^{T}\mid \varphi_{t}
\mid^{2} dt\right)< +\infty.
$$
\item $\bullet$ $\mathcal{S}^{\infty}([0,T],\mathbb{R})$ denotes
the set of one dimensional continuous $\mathcal{F}_{t}$-measurable bounded random processes.
\item $\bullet$ $\L^{\infty}(\R)$ denotes the space of all $\mathcal{F}_{T}$-measurable bounded random variables.
\item $\bullet$ $\mathcal{M}_{0,T}$ denotes the space of all stopping times taking values in $[0,T]$.
\item $\bullet$ The process $X$ is said to belong to Class (D) on $[0,T]$ if the family of random variables $\{X_{\tau},\,\tau\in\mathcal{M}_{0,T}\}$ is uniformly integrable.
\end{description}
Next, let us give the standing assumptions relative to VRBDSDE.
\begin{description}
\item {(\bf A1)} The boundary processes $X=\{X_t, 0\leq t\leq T\}$ is assumed to be an optional process of class (D) and is lower-semi-continuous in expectation,
\item {(\bf A2)} The coefficients $f:[0,T]\times \Omega\times \R\times\R\rightarrow\R$ and $ g:[0,T]\times \Omega\times \R\rightarrow\R$ satisfy the following assumptions:
\begin{enumerate}
\item [\rm (i) ] for fixed $(\omega,t,y)\in\Omega\times[0,T]\times\R$, the function $f(t,\omega,y,\cdot)$ is continuous and strictly decreasing from $+\infty$ to $-\infty$;
\item [\rm (ii) ] for fixed $(y,l)\in\R^2$, the processes $f(\cdot,\cdot,y,l)$ and $g(\cdot,\cdot,y)$ are jointly measurable with
\begin{eqnarray*}
\E\int_{0}^{T}[|f(t,\omega,y,l)|+|g(t,\omega,y)|^2]dt< +\infty;
\end{eqnarray*}
\item [\rm (iii) ] there exists a constant $L>0$, such that for all fixed $t,\omega, l,$ it holds that
\begin{eqnarray*}
|f(t,\omega,y,l)-f(t,\omega,y',l)|&\leq& L|y-y'|,\\
|g(t,\omega,y)-g(t,\omega,y')|&\leq& L|y-y'|,\;\;\forall\,y,\, y'\in\R;
\end{eqnarray*}
\item [\rm (iv) ] there exist two constants $k>0$ and $K>0$, such that for all fixed $t,\omega, y,$ it holds that
\begin{eqnarray*}
k|l-l'|\leq |f(t,\omega,y,l)-f(t,\omega,y,l')|\leq K|l-l'|,\;\;\forall\,l,\, l'\in\R.
\end{eqnarray*}
\end{enumerate}
\end{description}
Given $\xi\in\L^{2}(\R)$ and the boundary process $X$, we consider the following VRBDSDE.
\begin{description}
\item [\rm (i) ]
\begin{eqnarray}
Y_t=\xi+\int_{t}^{T}f(s,Y_s,A_s)ds+\int_{t}^{T}g(s,Y_s)dB_{s}-\int_{t}^{T}Z_sdW_{s},\; 0\leq t\leq T;\label{eq11}
\end{eqnarray}
\item [\rm (ii) ] $ Y_t\leq X_t,\; 0\leq t\leq T,\;\; Y_T=X_T=\xi;$
\item [\rm (iii) ] the process $(A_t)_t$ is $\mathcal{F}_t$-measurable, increasing, c\`{a}dl\`{a}g (right continuous with left limits), and $A_{0^-}=-\infty$, such that $\displaystyle{\E\int_0^T|Y_t-X_t|dA_t=0}$.
\end{description}

The study of this new type of BDSDEs is based on the extension of {\it Stochastic Representation Theorem} initiated by Bank and El Karoui \cite{BK}. To do this, let us consider the following filtration $(\mathcal{G}_t)_{t\geq 0}$ defined by
\begin{eqnarray*}
\mathcal{G}_t=\mathcal{F}_{t}^{W} \vee \mathcal{F}_{T}^{B}.
\end{eqnarray*}
\begin{theorem}\label{T1}
Assume {\rm ({\bf A2})--(i),\, (ii)}. Then, every optional process $X$ of class (D) which is lower semi-continuous in expectation admits a representation of the form
\begin{eqnarray}
X_{S}=\E\left\{X_T+\int_{S}^{T}f\left(u,\sup_{S\leq v\leq u}L_v\right)du+\int_{S}^{T}g\left(u\right)dB_u|\mathcal{F}_S\right\}\label{eq00}
\end{eqnarray}
for any stopping times $S\in\mathcal{M}_{0,T}$, where $L$ is an optional process taking values in $\R\cup\{-\infty,+\infty\}$, and it can be characterized as follows
\begin{description}
\item [\rm $(i)$ ] $\, f\left(u,\sup_{S\leq v\leq u}L_v\right)\in L^1(\P\otimes dt),\,g\left(u\right)\in L^2(\P\otimes dt)$ for any stopping times $S$,
\item [\rm $(ii)$ ] $\, L_S=\mbox{ess}\inf_{\tau>S}l_{S,\tau}$, where the "$\mbox{ess}\inf$" is taken over all stopping times $S\in\mathcal{M}_{0,T}$ such that $S<T$, a.s.; and $l_{S,\tau}$ is the unique $\mathcal{F}_S$-measurable random variable satisfying that
\begin{eqnarray*}
\E\{X_{S}-X_{\tau}|\mathcal{F}_S\}=\E\left\{\int_{S}^{\tau}f\left(u,l_{S,\tau}\right)du+\int_{S}^{\tau}g\left(u\right)dB_u|\mathcal{F}_S\right\},
\end{eqnarray*}
\item [\rm $(iii)$ ] if $\;V(t,l)=\mbox{ess}\inf_{\tau\geq t}\E\left\{X_{\tau}+\int_{t}^{\tau}f\left(u,l\right)du+\int_{t}^{\tau}g(u)dB_u|\mathcal{F}_t\right\},\, t\in[0,T]$, is the value functions of a family of optimal stopping problems indexed by $l\in\R$, then
\begin{eqnarray*}
L_t=\sup\{l: V(t,l)=X_t\},\;\;\; t\in[0,T].
\end{eqnarray*}
\end{description}
\end{theorem}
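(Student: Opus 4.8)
The plan is to reduce Theorem \ref{T1} to the representation theorem of Bank and El Karoui \cite{BK}; the only features not present in \cite{BK} are the extra backward It\^o integral $\int g\,dB$, which belongs to the data rather than to the unknown, and the fact that the index $\mathcal{F}_t=\mathcal{F}_t^W\vee\mathcal{F}_{t,T}^B$ is not a filtration. I would first absorb the $g$-term: put $\Phi_t:=\E\big[\int_t^T g(u)\,dB_u\mid\mathcal{F}_t\big]$ and $\widehat X_t:=X_t-\Phi_t$. Using the additivity $\int_S^\tau g\,dB_u=\Phi_S-\Phi_\tau$ of the backward integral and the fact that it is centred along stopping times, one checks that $\widehat X$ is still optional, of class (D), lower semi-continuous in expectation, with $\widehat X_T=X_T$, and that $\Phi_S$ is $\mathcal{F}_S$-measurable, so that subtracting $\Phi_S$ commutes with conditioning on $\mathcal{F}_S$. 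Then \eqref{eq00} is equivalent to the $g$-free identity $\widehat X_S=\E\big[\widehat X_T+\int_S^T f(u,\sup_{S\le v\le u}L_v)\,du\mid\mathcal{F}_S\big]$, and the relations in the statement defining $l_{S,\tau}$ and $V(t,l)$ translate, after reinstating $\int g\,dB$, into the analogous $g$-free relations for $\widehat X$. Note finally that, by (A2)(i)--(ii), $l\mapsto f(u,l)$ is continuous, strictly decreasing from $+\infty$ to $-\infty$, and $\P\otimes dt$-integrable, i.e.\ $f$ is exactly of the type treated in \cite{BK}.

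The next step is to replace the non-filtration $(\mathcal{F}_t)$ by the genuine filtration $(\mathcal{G}_t)=(\mathcal{F}_t^W\vee\mathcal{F}_T^B)$ introduced just before the statement. Since $\mathcal{G}_t=\mathcal{F}_t\vee\mathcal{F}_{0,t}^B$ and the past increments $\mathcal{F}_{0,t}^B$ are independent of $\mathcal{F}_T^W\vee\mathcal{F}_{t,T}^B$ (Brownian increments are independent and $W$ is independent of $B$), one has $\E[\eta\mid\mathcal{F}_t]=\E[\eta\mid\mathcal{G}_t]$ for every integrable $\eta$ measurable with respect to $\mathcal{F}_T^W\vee\mathcal{F}_{t,T}^B$ --- which covers $\widehat X_\tau$, $\int_t^\tau f(u,l)\,du$ and every other quantity occurring in the proof --- and likewise with $t$ replaced by a stopping time. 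Consequently the family of optimal stopping problems with value $V(t,l)$, their Snell envelopes and their Doob--Meyer decompositions may be computed with respect to the \emph{filtration} $(\mathcal{G}_t)$, where the results and the method of \cite{BK} are directly available, and the output is then read back in terms of $(\mathcal{F}_t)$.

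Working under $(\mathcal{G}_t)$, for each $l\in\R$ let $V(t,l)$ be the value function in (iii). By (A2)(i),(iv), $l\mapsto\int_t^\tau f(u,l)\,du$ is continuous and strictly decreasing from $+\infty$ to $-\infty$, hence so is $l\mapsto V(t,l)$ and it runs through a neighbourhood of $X_t$; since $V(t,l)\le X_t$ (take $\tau=t$), the set $\{l:V(t,l)=X_t\}$ is a nonempty interval, so $L_t:=\sup\{l:V(t,l)=X_t\}$ is well defined, which is (iii). The same strict monotonicity makes the equation defining $l_{S,\tau}$ uniquely solvable, yielding a well-defined $\mathcal{F}_S$-measurable random variable; the identity $L_S=\mathrm{ess\,inf}_{\tau>S}l_{S,\tau}$ is (ii), and the integrability items (i) follow from (A2)(ii) and the class-(D) property. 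Then one invokes the argument of \cite{BK}: $L$ has an optional modification, and integrating over the level $l$ the optimality conditions for the Snell envelope of $V(\cdot,l)$ --- using $\{L_u>l\}=\{V(u,l)<X_u\}$ up to null sets together with the identity $\sup_{S\le v\le u}L_v=\int_\R\big(\mathbf{1}_{\{l<\sup_{S\le v\le u}L_v\}}-\mathbf{1}_{\{l<0\}}\big)\,dl$ --- reconstructs $\int_S^T f(u,\sup_{S\le v\le u}L_v)\,du$ and gives the $g$-free representation of the first paragraph; reinstating $\Phi$ yields \eqref{eq00}.

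The hard part will be the passage between the two indices: verifying carefully that $\E[\cdot\mid\mathcal{F}_\tau]=\E[\cdot\mid\mathcal{G}_\tau]$ for all the processes and stopping times that arise, and that $\widehat X$ retains the class-(D) and lower-semicontinuity-in-expectation properties with respect to the larger filtration $(\mathcal{G}_t)$, so that the whole optimal-stopping apparatus of \cite{BK} genuinely transfers. Once that is in place, the remaining points --- continuity and strict monotonicity of $V(\cdot,l)$, the layer-cake identity, the Doob--Meyer bookkeeping, and the handling of the values $L\in\{-\infty,+\infty\}$ --- are routine adaptations of the corresponding arguments in \cite{BK}.
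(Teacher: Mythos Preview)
Your approach is essentially the same as the paper's: absorb the $g$-integral into the boundary process, pass to the genuine filtration $\mathcal{G}_t=\mathcal{F}_t^W\vee\mathcal{F}_T^B$, invoke Bank--El Karoui \cite{BK} there, and then read the result back in $(\mathcal{F}_t)$. The only cosmetic difference is that the paper sets $\widetilde X_t:=X_t+\int_0^t g(u)\,dB_u$ and applies \cite[Theorem~3]{BK} to $\widetilde X$ as a black box, whereas you subtract $\Phi_t=\E[\int_t^T g\,dB\mid\mathcal{F}_t]$; since $\int_t^T g\,dB$ is already $\mathcal{F}_{t,T}^B\subset\mathcal{F}_t$-measurable, your $\widehat X$ and the paper's $\widetilde X$ differ by the constant $\int_0^T g\,dB$, so the two reductions coincide. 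Two minor remarks: you need only (A2)(i),(ii) here, not (iv); and your recapitulation of the internal Snell-envelope/layer-cake machinery of \cite{BK} is unnecessary---the paper simply cites the result.
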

\begin{proof}
Let $X$ be a optional process $X$ of class (D) which is lower semi-continuous in expectation and $g$ be a function given above. Setting
\begin{eqnarray*}
\widetilde{X}_t=X_t+\int^{t}_{0}g(u)dB_u,
\end{eqnarray*}
according to assumption $({\bf A1})$ and $({\bf A2})$, it is clear that $\widetilde{X}$ is a optional process of class (D) and is lower semi-continuous. Therefore, it follows from Theorem 3 in \cite{BK} that there exists an optional process $L$ taking values in $\R\cup\{-\infty,+\infty\}$ such that for any stopping times $S\in\mathcal{M}_{0,T}$,
\begin{eqnarray}
\widetilde{X}_{S}=\E\left\{\widetilde{X}_T+\int_{S}^{T}f\left(u,\sup_{S\leq v\leq u}L_v\right)du|\mathcal{G}_S\right\}.\label{eq000}
\end{eqnarray}
Moreover, $L$ is characterized as follows:
\begin{itemize}
\item $\, f\left(u,\sup_{S\leq v\leq u}L_v\right)\in L^1(\P\otimes dt)$ for any stopping times $S$, which satisfy $(i)$.
\item $\, L_S=\mbox{ess}\inf_{\tau>S}l_{S,\tau}$, where the "$\mbox{ess}\inf$" is taken over all stopping times $S\in\mathcal{M}_{0,T}$ such that $S<T$, a.s.; and $l_{S,\tau}$ is the unique $\mathcal{G}_S$-measurable random variable satisfying that
\begin{eqnarray}
\E\{\widetilde{X}_{S}-\widetilde{X}_{\tau}|\mathcal{G}_S\}=\E\left\{\int_{S}^{\tau}f\left(u,l_{S,\tau}\right)du|
\mathcal{G}_S\right\}.\label{eq001}
\end{eqnarray}
\item If $\widetilde{V}(t,l)=\mbox{ess}\inf_{\tau\geq t}\E\left\{\widetilde{X}_{\tau}+\int_{t}^{\tau}f\left(u,l\right)du|\mathcal{G}_t\right\},\, t\in[0,T]$, is the value functions of a family of optimal stopping problems indexed by $l\in\R$, then
\begin{eqnarray*}
L_t=\sup\{l:\widetilde{ V}(t,l)=\widetilde{X}_t\},\;\;\; t\in[0,T].
\end{eqnarray*}
\end{itemize}
Since $\widetilde{X}_S$ is $\mathcal{F}_S$-measurable and $\mathcal{F}_S\subset\mathcal{G}_S$, and according to the definition of $\widetilde{X}$, it follows from equalities \eqref{eq000} and \eqref{eq001} that
\begin{eqnarray*}
X_{S}=\E\left\{X_T+\int_{S}^{T}f\left(u,\sup_{S\leq v\leq u}L_v\right)du+\int^{T}_{S}g(u)dB_u|\mathcal{F}_S\right\}
\end{eqnarray*}
and
\begin{eqnarray}
\E\{X_{S}-X_{\tau}|\mathcal{F}_S\}=\E\left\{\int_{S}^{\tau}f\left(u,l_{S,\tau}\right)du+\int^{\tau}_{S}g(u)dB_u|
\mathcal{F}_S\right\},\label{eq002}
\end{eqnarray}
respectively. To prove $(ii)$, it remains to show that $l_{S,\tau}$ is a $\mathcal{F}_S$-measurable random variable, which is clear by \eqref{eq001}. To end the proof let us show $(iii)$.
In fact, equalities \eqref{eq001} and \eqref{eq002} provide

$\displaystyle\E\left\{X_{\tau}+\int_{S}^{\tau}f\left(u,l_{S,\tau}\right)du+\int^{\tau}_{S}g(u)dB_u|
\mathcal{G}_S\right\}$
$$=\E\left\{X_{\tau}+\int_{S}^{\tau}f\left(u,l_{S,\tau}\right)du+\int^{\tau}_{S}g(u)dB_u|
\mathcal{F}_S\right\}.$$
Hence, denoting
\begin{eqnarray*}
V(t,l)=\widetilde{V}(t,l)-\int^{\tau}_{t}g(u)dB_u,
\end{eqnarray*}
we have
\begin{eqnarray*}
V(t,l)=\mbox{ess}\inf_{\tau\geq t}\E\left\{X_{\tau}+\int_{t}^{\tau}f\left(u,l\right)du+\int^{\tau}_{S}g(u)dB_u|\mathcal{F}_t\right\}
\end{eqnarray*}
and
\begin{eqnarray*}
L_t=\sup\{l:\, V(t,l)=X_t\},\;\;\; t\in[0,T],
\end{eqnarray*}
which prove $(iii)$.
\end{proof}
A direct consequence of the previous stochastic representation theorem is the following stochastic variant Skorohod problem.
\begin{theorem}\label{T2}
Assume {\rm ({\bf A2})--(i),\, (ii)}. Then, for every optional process $X$ of class (D) which is lower semi-continuous in expectation, there exists a unique pair of
$\mathcal{F}_t$-measurable processes $(Y,A)$, where $Y$ is continuous and $A$ is increasing such that
\begin{eqnarray*}
Y_{t}=\E\left\{X_T+\int_{t}^{T}f\left(u,A_u\right)du+\int_{t}^{T}g\left(u\right)dB_u|\mathcal{F}_t\right\}, \,\, t\in[0,T].
\end{eqnarray*}
Furthermore, the process $A$ can be expressed as $A_t=\sup_{0\leq s\leq t^+}L_s$, where $L$ is the process in Theorem \ref{T1}.
\end{theorem}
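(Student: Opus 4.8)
The plan is to deduce this from Theorem~\ref{T1} in exactly the way that Bank and El Karoui \cite{BK} deduce their Skorohod-problem corollary from their representation theorem. Let $L$ be the optional process furnished by Theorem~\ref{T1}, and set
\[
A_t:=\sup_{0\le s\le t^{+}}L_s,\qquad A_{0^-}:=-\infty,
\]
so that $A$ is increasing and c\`{a}dl\`{a}g by construction and is $\mathcal{F}_t$-measurable, its adaptedness being inherited from $L$ exactly as in the proof of Theorem~\ref{T1}. Define $Y$ by the formula in the statement,
\[
Y_t:=\E\Big\{X_T+\int_t^T f(u,A_u)\,du+\int_t^T g(u)\,dB_u\,\Big|\,\mathcal{F}_t\Big\},
\]
which is meaningful since $f(u,A_u)=f\big(u,\sup_{0\le v\le u}L_v\big)\in L^1(\P\otimes dt)$ is Theorem~\ref{T1}$(i)$ taken at $S=0$, and $g(\cdot)\in L^2(\P\otimes dt)$ by ({\bf A2})--(ii). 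What then remains is to verify that $Y$ is continuous and that the Skorohod constraints hold: $Y_t\le X_t$ on $[0,T]$, $Y_T=X_T$, and $\E\int_0^T(X_t-Y_t)\,dA_t=0$.

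The inequality $Y_t\le X_t$ is immediate from Theorem~\ref{T1} used at the deterministic stopping time $S=t$: for $u\ge t$ one has $\sup_{t\le v\le u}L_v\le\sup_{0\le v\le u}L_v=A_u$, and as $f(u,\cdot)$ is decreasing by ({\bf A2})--(i) this yields $f(u,A_u)\le f\big(u,\sup_{t\le v\le u}L_v\big)$ and hence $Y_t\le X_t$; the equality $Y_T=X_T$ is trivial. For the continuity of $Y$ and the flat-off identity I would, as in the proof of Theorem~\ref{T1}, pass to the auxiliary process $\widetilde X_t=X_t+\int_0^t g(u)\,dB_u$ (again an optional process of class (D) that is lower semi-continuous in expectation, by ({\bf A1})--({\bf A2})) and set $\widetilde Y_t:=Y_t+\int_0^t g(u)\,dB_u$. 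Substituting into the formula for $Y$ turns it into $\widetilde Y_t=\E\{\widetilde X_T+\int_t^T f(u,A_u)\,du\mid\mathcal{G}_t\}$, so $(\widetilde Y,A)$ is precisely the Bank and El Karoui solution of the variant Skorohod problem for $\widetilde X$ relative to the \emph{genuine} filtration $(\mathcal{G}_t)$. By \cite{BK} the process $\widetilde Y$ is continuous --- here one uses that $\mathcal{G}$-martingales are continuous, the filtration $(\mathcal{G}_t)$ being, conditionally on the independent $\sigma$-field $\mathcal{F}_T^B$, generated by the Brownian motion $W$ --- and therefore $Y=\widetilde Y-\int_0^{\,\cdot}g(u)\,dB_u$ is continuous as a difference of continuous processes. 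In the same way the flat-off identity for $(\widetilde Y,A,\widetilde X)$ supplied by \cite{BK} transfers verbatim to $(Y,A,X)$ since $X_t-Y_t=\widetilde X_t-\widetilde Y_t$, and the asserted form of $A$ is just its definition.

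For uniqueness, let $(Y',A')$ be a second admissible pair. Setting $\widetilde Y'_t=Y'_t+\int_0^t g(u)\,dB_u$ and reversing the substitution shows that $(\widetilde Y',A')$ solves the variant Skorohod problem for $\widetilde X$ in $(\mathcal{G}_t)$, so the uniqueness part of the Bank and El Karoui corollary forces $\widetilde Y'=\widetilde Y$ and $A'=A$, whence $Y'=Y$. The step I expect to be the real obstacle is not a single estimate but the measurability bookkeeping: one must ensure that $L$, the running supremum $A$, and $\widetilde Y$ --- all produced by \cite{BK} relative to the larger filtration $(\mathcal{G}_t)=(\mathcal{F}_t^W\vee\mathcal{F}_T^B)$ --- genuinely descend to the collection $(\mathcal{F}_t)$, so that $A_t$ and $Y_t$ are $\mathcal{F}_t$-measurable and conditioning on $\mathcal{G}_t$ may be replaced by conditioning on $\mathcal{F}_t$ in the displayed representation. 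As in the proof of Theorem~\ref{T1}, this comes down to checking that the conditioned quantities are $\mathcal{F}_t$-measurable and that the extra information $\mathcal{F}_t^B\subset\mathcal{G}_t$ drops out of the relevant conditional expectations, which uses the independence of $X_T\in\mathcal{F}_T^W$ and of the increments of $W$ from that information. A companion subtlety --- and the reason one cannot bypass the auxiliary process --- is that $(\mathcal{F}_t)$ is not a filtration, so continuity of $Y$ must be routed through $\widetilde Y$ in $(\mathcal{G}_t)$ rather than read off from a martingale-regularity theorem in $(\mathcal{F}_t)$ directly.
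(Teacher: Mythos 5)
Your argument is correct in substance and shares its backbone with the paper's proof --- both define $A_t=\sup_{0\le s\le t^+}L_s$ from Theorem \ref{T1}, work in the enlarged filtration $\mathcal{G}_t=\mathcal{F}^W_t\vee\mathcal{F}^B_T$, and descend to $\mathcal{F}_t$ by the independence argument ($\mathcal{F}^B_t$ is independent of the $\mathcal{F}^W_T\vee\mathcal{F}^B_{t,T}$-measurable conditioned variable) --- but the mechanics differ in an instructive way. The paper does not pass back through the shifted process: it introduces the $\mathcal{G}_t$-square-integrable martingale $M_t=\E\{X_T+\int_0^Tf(u,A_u)du+\int_0^Tg(u)dB_u|\mathcal{G}_t\}$, invokes the extended It\^o martingale representation theorem to produce a process $Z$, and thereby obtains the semimartingale form $Y_t=X_T+\int_t^Tf(u,A_u)du+\int_t^Tg(u)dB_u-\int_t^TZ_udW_u$ recorded in the Remark following the theorem (and used later when the mapping $\Phi$ is defined); continuity of $Y$ is implicit in that representation, while the constraint $Y\le X$, the flat-off condition and uniqueness are not verified there at all (the flat-off identity only appears inside the proof of Theorem \ref{T311}). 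You instead define $Y$ directly by the conditional expectation, reuse the paper's own Theorem \ref{T1} device $\widetilde X=X+\int_0^{\cdot}g(u)dB_u$, $\widetilde Y=Y+\int_0^{\cdot}g(u)dB_u$ to reduce to the variant Skorohod problem for $\widetilde X$ in $(\mathcal{G}_t)$, and import continuity, flat-off and uniqueness from there; this actually delivers more of what the statement nominally asserts (as literally stated, without $Y\le X$ and the flat-off condition, uniqueness would fail, so your ``admissible pair'' reading is the right one), at the cost of not producing $Z$ --- which you could recover afterwards by exactly the paper's martingale-representation step. Two small attribution/rigor caveats, on which you are no worse off than the paper: the Skorohod-problem formulation with flat-off and uniqueness is really the variant Skorohod problem of Ma and Wang \cite{MW} built on the representation theorem of \cite{BK}, not literally a corollary stated in \cite{BK}, and its transfer to $(\mathcal{G}_t)$ rests on the continuity of square-integrable $\mathcal{G}_t$-martingales (the same ``obvious extension'' of It\^o representation the paper invokes), together with the glossed-over integrability needed to make $M$ square integrable and $\widetilde X$ of class (D).
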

Before give the proof of the above theorem, let us give a remark.
\begin{remark}
The previous theorem can be enounced as follows:
there exists a unique pair of $\mathcal{F}_t$-measurable processes $(Y,Z,A)$, where $Y$ is continuous and $A$ is increasing such that
\begin{eqnarray*}
Y_{t}=X_T+\int_{t}^{T}f\left(u,A_u\right)du+\int_{t}^{T}g\left(u\right)dB_u-\int^{T}_{t}Z_udW_u, \,\, t\in[0,T].
\end{eqnarray*}
\end{remark}
\begin{proof}
Let us define  $A_t=\sup_{0\leq s\leq t^+}L_s$, where $L$ is the process appears in \eqref{eq00} and the $\mathcal{G}_t$-square integrable martingale
\begin{eqnarray*}
M_t=\E\left\{X_T+\int_{0}^{T}f\left(u,A_u\right)du+\int_{0}^{T}g\left(u\right)dB_u|\mathcal{G}_t\right\},\;\; 0\leq t\leq T.
\end{eqnarray*}
An obvious extension of the It\^{o} martingale representation theorem yields the
existence of  a $\mathcal{G}_t$-progressively measurable process $\{Z_t\}$ with values in $\R^d$ such
that
\begin{eqnarray*}
&&\E\left(\int^T_0|Z_s|^2ds\right)<+\infty,\\
&&M_t=M_0+\int_0^tZ_sdW_s,\;\;\; 0\leq t\leq T.
\end{eqnarray*}
Hence,
\begin{eqnarray*}
M_T=M_t+\int_t^TZ_sdW_s,\;\;\; 0\leq t\leq T.
\end{eqnarray*}
Replacing $M_T$ and $M_t$, by their defining formulas and subtracting $\int_{0}^{t}f\left(u,A_u\right)du+\int_{0}^{t}g\left(u\right)dB_u$ from both
sides of the equality yields that
\begin{eqnarray*}
Y_{t}=X_T+\int_{t}^{T}f\left(u,A_u\right)du+\int_{t}^{T}g\left(u\right)dB_u-\int^{T}_{t}Z_udW_u,
\end{eqnarray*}
where
\begin{eqnarray}
Y_{t}=\E\left\{X_T+\int_{t}^{T}f\left(u,A_u\right)du+\int_{t}^{T}g\left(u\right)dB_u|\mathcal{G}_t\right\}.\label{eq00'}
\end{eqnarray}
It remains to show that $\{Y_t\}$ and $\{Z_t\}$ are $\mathcal{F}_t$-measurable. For $Y_t$, this is
obvious since for each $t$,
\begin{eqnarray*}
Y_{t}=\E\left\{\Theta|\mathcal{F}_t\vee\mathcal{F}^{B}_t\right\}.
\end{eqnarray*}
where $\Theta$ is $\mathcal{F}^{W}_T\vee\mathcal{F}^{B}_{t,T}$-measurable. Hence $\mathcal{F}^{B}_t$ is independent of $\mathcal{F}_t\vee\sigma(\Theta)$, and
\begin{eqnarray*}
Y_{t}=\E\left\{\Theta|\mathcal{F}_t\right\}.
\end{eqnarray*}
Now
\begin{eqnarray*}
\int^{T}_{t}Z_udW_u=X_T+\int_{t}^{T}f\left(u,A_u\right)du+\int_{t}^{T}g\left(u\right)dB_u-Y_{t},
\end{eqnarray*}
and the right side is $\mathcal{F}^{W}_T\vee\mathcal{F}^{B}_{t,T}$-measurable. Hence, from the It\^{o} martingale representation theorem $\{Z_s, t<s<T\}$
is $\mathcal{F}^{W}_s\vee\mathcal{F}^{B}_{t,T}$-adapted. Consequently, $Z_s$ is $\mathcal{F}^{W}_s\vee\mathcal{F}^{B}_{t,T}$-measurable, for any $t<s$ so it
is $\mathcal{F}^{W}_s\vee\mathcal{F}^{B}_{s,T}$ measurable. Therefore, the equality \eqref{eq00'} becomes
\begin{eqnarray*}
Y_{t}=\E\left\{X_T+\int_{t}^{T}f\left(u,A_u\right)du+\int_{t}^{T}g\left(u\right)dB_u|\mathcal{F}_t\right\},\label{eq00'}
\end{eqnarray*}
which shows the desired result.
\end{proof}

\section{Main results}
\setcounter{theorem}{0} \setcounter{equation}{0}
The main objective of this section is to prove the existence and uniqueness result to the new type of reflected BDSDEs. As mentioned in \cite{MW},
 we use the well-known contraction mapping theorem, to provide the existence and uniqueness of the solution.  Next, like as in \cite{MW}, we derive the comparison
 theorem and a stability result of such equations.
\subsection{Existence and uniqueness}
Let us make the following extra assumptions on the boundary process $X$ and the coefficients $f$ and $g$.
\begin{description}
\item ({\bf A3}) There exists a constant $\Gamma>0$, such that
\begin{enumerate}
\item [\rm (i)] for any $\mu\in\mathcal{M}_{0,T}$, it holds that
\begin{eqnarray*}
\mbox{ess}\sup_{\overset{\tau>\mu}{\tau\in\mathcal{M}_{0,T}}}\left\{\left|\frac{\E\left\{X_\tau-X_{\mu}|\mathcal{F}_{\mu}\right\}}
{\E\left\{\tau-\mu|\mathcal{F}_{\mu}\right\}}\right|
+\left|\frac{\left[\left(\E\int_{\mu}^{\tau}|g(u,0)|^2du|\mathcal{F}_\mu\right)\right]^{1/2}}{\E\left\{\tau-\mu|\mathcal{F}_{\mu}\right\}}\right|\right\}\leq \Gamma, \;\; a.s.;
\end{eqnarray*}
\item [\rm (ii)] $|f(t,0,0)|\leq \Gamma,\;\;\; t\in[0,T]$.
\end{enumerate}
\end{description}
Let us consider the following mapping $\Phi$ on $\mathcal{S}^{\infty}([0,T],\R)$: for a given process $y\in \mathcal{S}^{2}([0,T],\R)$, we define $\Phi(y)_t=Y_t,\, t\in[0,T]$, where $(Y,Z,A)$ is the unique solution of the variant Skorohod problem:
\begin{eqnarray}
&&Y_{t}=\xi+\int_{t}^{T}f\left(u,y_u,A_u\right)du+\int_{t}^{T}g\left(u,y_u\right)dB_u-\int^{T}_{t}Z_udW_u, \,\, t\in[0,T],\nonumber\\\label{eq2}\\
&&\E\int^T_0|Y_t-X_t|dA_t=0.\nonumber
\end{eqnarray}
It follows from Theorem \ref{T1} and Theorem \ref{T2} that the reflecting process $A$ is exactly determined by $y$ in this sense: $A_t=\sup_{0\leq s\leq t^+}L_s$ and
$L$ satisfies the stochastic representation:
\begin{eqnarray*}
X_{t}=\xi+\int_{t}^{T}f\left(u,y_u,\sup_{t\leq v\leq u}L_v\right)du+\int_{t}^{T}g\left(u,y_u\right)dB_u-\int_{t}^{T}\overline{Z}dW_u,\;\; t\in[0,T].
\end{eqnarray*}
Our goal is to prove that the mapping $\Phi$ is a contraction from $\mathcal{S}^{2}([0,T],\R)$ to itself. However, it should be noted that the contraction can only show
the existence and uniqueness of $Y$; the uniqueness of $A$ must be established separately.

We now derive some priori estimates that will be useful in the sequel. To begin with, let us consider the stochastic representation
\begin{eqnarray*}
X_{t}=\xi+\int_{t}^{T}f\left(u,0,\sup_{t\leq v\leq u}L^0_v\right)du+\int_{t}^{T}g\left(u,0\right)dB_u-\int^{T}_{t}Z^{0}_udW_u.
\end{eqnarray*}
Let us denote $A^0_t=\sup_{0\leq s\leq t^+}L_s^0$. Then we have the following result.
\begin{lemma}
Assume $({\bf A1}),\,({\bf A2})$ and $({\bf A3})$  hold. Then, it holds that
\begin{eqnarray}
\|A^0\|_{\infty}\leq \frac{3\sqrt{3}\Gamma}{k},
\end{eqnarray}
where $k$ and $\Gamma$ are the constants appearing in the previous assumptions.
\end{lemma}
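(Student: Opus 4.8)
The plan is to derive a uniform pathwise bound on the process $L^0$ and then transfer it to $A^0$ through the identity $A^0_t=\sup_{0\le s\le t^{+}}L^0_s$. Assumptions $({\bf A1})$ and $({\bf A2})$--(i),(ii) guarantee that the representation of Theorem~\ref{T1} applies with $y\equiv0$ (i.e.\ to the one-variable coefficients $u\mapsto f(u,0,\cdot)$ and $u\mapsto g(u,0)$), so, writing $l^0_{S,\tau}$ for the quantity $l_{S,\tau}$ of Theorem~\ref{T1} in this $y\equiv0$ instance, item (ii) of that theorem says that for each stopping time $S<T$ and each stopping time $\tau$ with $S<\tau\le T$, $l^0_{S,\tau}$ is the unique $\mathcal F_S$-measurable solution of
\begin{equation*}
\E\{X_{S}-X_{\tau}\mid\mathcal F_S\}=\E\Big\{\int_{S}^{\tau}f(u,0,l^0_{S,\tau})\,du\mid\mathcal F_S\Big\}+\E\Big\{\int_{S}^{\tau}g(u,0)\,dB_u\mid\mathcal F_S\Big\},
\end{equation*}
and $L^0_S=\mbox{ess}\inf_{\tau>S}l^0_{S,\tau}$. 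Hence it suffices to bound $|l^0_{S,\tau}|$ by $3\sqrt3\,\Gamma/k$ uniformly in $S$ and $\tau$ (as a byproduct this also shows $L^0$, hence $A^0$, is finite).

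For that bound I would first split off $f(u,0,0)$, writing $f(u,0,l^0_{S,\tau})=f(u,0,0)+\big(f(u,0,l^0_{S,\tau})-f(u,0,0)\big)$. Since $f(u,0,\cdot)$ is strictly decreasing, for a.e.\ $\omega$ the integrand $f(u,0,l^0_{S,\tau}(\omega))-f(u,0,0)$ keeps the constant sign $-\mathrm{sign}\,l^0_{S,\tau}(\omega)$ over all $u\in[S(\omega),\tau(\omega)]$, and by $({\bf A2})$--(iv) its modulus is at least $k\,|l^0_{S,\tau}(\omega)|$. As $l^0_{S,\tau}$ is $\mathcal F_S$-measurable, that sign is $\mathcal F_S$-measurable, so the modulus of the conditional expectation of $\int_S^\tau\big(f(u,0,l^0_{S,\tau})-f(u,0,0)\big)\,du$ equals the conditional expectation of its modulus, which is $\ge k\,|l^0_{S,\tau}|\,\E\{\tau-S\mid\mathcal F_S\}$. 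On the other hand, inserting the defining equation into that same conditional expectation and applying the triangle inequality shows it is at most
\begin{equation*}
\big|\E\{X_{S}-X_{\tau}\mid\mathcal F_S\}\big|+\E\Big\{\int_S^\tau|f(u,0,0)|\,du\mid\mathcal F_S\Big\}+\Big|\E\Big\{\int_S^\tau g(u,0)\,dB_u\mid\mathcal F_S\Big\}\Big|.
\end{equation*}
The middle term is $\le\Gamma\,\E\{\tau-S\mid\mathcal F_S\}$ by $({\bf A3})$--(ii), and the last term is $\le\big(\E\{\int_S^\tau|g(u,0)|^2\,du\mid\mathcal F_S\}\big)^{1/2}$ by conditional Jensen followed by the conditional It\^{o} isometry for the backward integral. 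Dividing the resulting inequality by $\E\{\tau-S\mid\mathcal F_S\}>0$ (legitimate since $\tau>S$ a.s.) leaves exactly the two ratios controlled by $({\bf A3})$--(i); collecting the constants gives $k\,|l^0_{S,\tau}|\le 3\sqrt3\,\Gamma$. Taking the essential infimum over $\tau>S$ yields $|L^0_S|\le 3\sqrt3\,\Gamma/k$ for every stopping time $S$, and since $A^0_t$ is a supremum of such values the bound passes to $\|A^0\|_{\infty}$.

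The step I expect to be the main obstacle is precisely the passage from the implicit characterisation of $l^0_{S,\tau}$ to a genuine two-sided estimate. It is here that the \emph{strict} monotonicity of $f$ in its last variable is indispensable: it pins down the sign of $f(u,0,l^0_{S,\tau})-f(u,0,0)$, which is what lets one bound the conditional expectation of that term from below rather than merely from above; note that only the lower Lipschitz constant $k$ of $({\bf A2})$--(iv) is used, the upper constant $K$ and the $y$-Lipschitz condition $({\bf A2})$--(iii) playing no role in this lemma. A secondary, more technical difficulty is the control of $\E\{\int_S^\tau g(u,0)\,dB_u\mid\mathcal F_S\}$: because $\{\mathcal F_t\}$ is not a filtration and the $dB$-integral is a backward It\^{o} integral, one must check that conditional Jensen and the conditional isometry indeed produce exactly the quantity $\big(\E\{\int_S^\tau|g(u,0)|^2\,du\mid\mathcal F_S\}\big)^{1/2}$ that $({\bf A3})$--(i) is tailored to bound, with no spurious extra factor.
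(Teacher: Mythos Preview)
Your argument is correct and follows essentially the same route as the paper: bound $|l^0_{S,\tau}|$ via the defining equation by splitting off $f(u,0,0)$, using the sign/monotonicity of $f(\cdot,0,\cdot)$ together with $({\bf A2})$--(iv) for the lower bound $k|l^0_{S,\tau}|\,\E\{\tau-S\mid\mathcal F_S\}$, controlling the three remaining terms with $({\bf A3})$ and the conditional isometry, and then passing to $A^0$ through $L^0$. The only cosmetic difference is that the paper squares the inequality (picking up the extra factor $\sqrt{3}$) before taking roots, whereas your direct triangle-inequality route in fact yields the sharper bound $3\Gamma/k$; either way the stated estimate holds.
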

\begin{proof}
For fixed $s\in[0,T]$ and any stopping times $\tau>s$, let us denote by $l^0_{s,\tau}$ the $\mathcal{F}_s$-measurable random variable such that
\begin{eqnarray*}
X_s-X_{\tau}=\int_{s}^{\tau}f\left(u,0,l^0_{s,\tau}\right)du+\int_{s}^{\tau}g\left(u,0\right)dB_u-\int^{\tau}_{s}Z^{0}_udW_u.
\end{eqnarray*}
Then, it follows from Theorem \ref{T1} that $L^0_s=\mbox{ess}\inf_{\tau>s}l_{s,\tau}^0$ and $A_t^0=\sup_{0\leq s\leq t^+}L^0_s$. On the other hand, we have
$$\E(X_s-X_{\tau}|\mathcal{F}_{s})-\E\left(\int_{s}^{\tau}f\left(u,0,0\right)du|\mathcal{F}_s\right)
-\E\left(\int_{s}^{\tau}g\left(u,0\right)dB_u|\mathcal{F}_s\right)$$
\begin{eqnarray}
=\E\left(\int_{s}^{\tau}[f\left(u,0,l^0_{s,\tau}\right)-f\left(u,0,0\right)]du|\mathcal{F}_s\right).\label{eq1}
\end{eqnarray}
On the set $\{\omega,\,l_{s,\tau}^0(\omega)<0\}$, since $f(t,0,\cdot)$ is decreasing and $l^0_{s,\tau}$ is $\mathcal{F}_s$-measurable, we have
\begin{eqnarray*}
\E\left(\int_{s}^{\tau}[f\left(u,0,l^0_{s,\tau}\right)-f\left(u,0,0\right)]du|\mathcal{F}_s\right)&\geq& \E\left(\int_{s}^{\tau}k|l^0_{s,\tau}|du|\mathcal{F}_s\right)\\
&\geq &k|l^0_{s,\tau}|\E\left(\tau-s|\mathcal{F}_s\right).
\end{eqnarray*}
According to \eqref{eq1}, we get
\begin{eqnarray*}
\E(X_s-X_{\tau}|\mathcal{F}_{s})-\E\left(\int_{s}^{\tau}f\left(u,0,0\right)du
|\mathcal{F}_s\right)-\E\left(\int_{s}^{\tau}g\left(u,0\right)dB_u|\mathcal{F}_s\right)
\geq k|l^0_{s,\tau}|\E\left(\tau-s|\mathcal{F}_s\right).
\end{eqnarray*}
In other words, on $\{l_{s,\tau}^0<0\}$, we have
\begin{eqnarray}
|l^0_{s,\tau}|&\leq&\frac{1}{k}\left\{\frac{\E(X_s-X_{\tau}|\mathcal{F}_{s})}{\E\left(\tau-s|\mathcal{F}_s\right)}-\frac{\E\left(\int_{s}^{\tau}f\left(u,0,0\right)du|\mathcal{F}_s\right)}
{\E\left(\tau-s|\mathcal{F}_s\right)}-\frac{\E\left(\int_{s}^{\tau}g\left(u,0\right)dB_u|\mathcal{F}_s\right)}{\E\left(\tau-s|\mathcal{F}_s\right)}\right\}.\;\;\;\;\;\;\;\; \;\;\;\;\;\;\;\;\;\; \label{eq2}
\end{eqnarray}
We can show similarly that on the set $\{l_{s,\tau}^0>0\}$ the following relation holds
\begin{eqnarray}
l^0_{s,\tau}&\leq&\frac{1}{k}\left\{-\frac{\E(X_s-X_{\tau}|\mathcal{F}_{s})}{\E\left(\tau-s|\mathcal{F}_s\right)}+\frac{\E\left(\int_{s}^{\tau}
f\left(u,0,0\right)du|\mathcal{F}_s\right)}
{\E\left(\tau-s|\mathcal{F}_s\right)}+\frac{\E\left(\int_{s}^{\tau}g\left(u,0\right)dB_u|\mathcal{F}_s\right)}{\E\left(\tau-s|
\mathcal{F}_s\right)}\right\}.\;\;\;\;\;\;\;\;\;\;\;\;\;\;\label{eq3}
\end{eqnarray}
Putting \eqref{eq2} and \eqref{eq3} together, we have
\begin{eqnarray}
|l^0_{s,\tau}|^2&\leq&\frac{3}{k^2}\Bigg\{\left|\frac{\E(X_s-X_{\tau}|\mathcal{F}_{s})}{\E\left(\tau-s|\mathcal{F}_s\right)}\right|^2
+\left|\frac{\E\left(\int_{s}^{\tau}|f\left(u,0,0\right)|du|\mathcal{F}_s\right)}
{\E\left(\tau-s|\mathcal{F}_s\right)}\right|^2\nonumber\\
&&+\frac{\E\left(\left|\int_{s}^{\tau}g\left(u,0\right)dB_u\right|^2
|\mathcal{F}_s\right)}{\left|\E\left(\tau-s|\mathcal{F}_s\right)\right|^2}\Bigg\}.\;\;\;\;\;\;\;\;\;\;\;\;\;\;\label{eq4}
\end{eqnarray}
Using conditional expectation version of isometry property, we get
\begin{eqnarray*}
\E\left(\left|\int_{s}^{\tau}g\left(u,0\right)dB_u\right|^2|\mathcal{F}_s\right)&=& \E\left(\int_{s}^{\tau}|g\left(u,0\right)|^2du|\mathcal{F}_s\right)
\end{eqnarray*}
which together with \eqref{eq4} leads to
\begin{eqnarray}
|l^0_{s,\tau}|&\leq&\frac{\sqrt{3}}{k}\Bigg\{\left|\frac{\E(X_s-X_{\tau}|\mathcal{F}_{s})}{\E\left(\tau-s|\mathcal{F}_s\right)}\right|
+\frac{\E\left(\int_{s}^{\tau}|f\left(u,0,0\right)|du|\mathcal{F}_s\right)}
{\E\left(\tau-s|\mathcal{F}_s\right)}\nonumber\\
&&+\frac{\left[\E\left(\int_{s}^{\tau}|g\left(u,0\right)|^2du
|\mathcal{F}_s\right)\right]^{1/2}}{\E\left(\tau-s|\mathcal{F}_s\right)}\Bigg\}.\;\;\;\;\;\;\;\;\;\;\;\;\;\;\label{eq5}
\end{eqnarray}
Since
\begin{eqnarray*}
|A^{0}_t|=\left|\sup_{0\leq s\leq t^+}L^0_s\right|\leq \sup_{0\leq s\leq t^+}|L^0_s|=\sup_{0\leq s\leq t^+}\left\{\mbox{ess}\inf_{\tau>s}|l_{s,\tau}|\right\},
\end{eqnarray*}
we derive from \eqref{eq4} and $({\bf A3})$ that
\begin{eqnarray*}
|A^{0}_t|\leq\sup_{0\leq s\leq t^+}\left\{\mbox{ess}\inf_{\tau>s}|l_{s,\tau}|\right\}\leq \frac{3\, \sqrt{3}\,\Gamma}{k}
\end{eqnarray*}
and ends the proof.
\end{proof}

\begin{lemma}\label{L312}
Assume $({\bf A1}),\,({\bf A2})$ and $({\bf A3})$ hold. Then, for any $t\in[0, T]$, it holds almost surely that
\begin{eqnarray*}
|A_t-A'_t|\leq \frac{\sqrt{2}\,L}{k}(1+\sqrt{T})\|y-y'\|_{\infty}.
\end{eqnarray*}
\end{lemma}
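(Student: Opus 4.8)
The plan is to reduce the statement to a pointwise bound, uniform in the stopping time, for the difference of the random variables $l_{s,\tau}$ and $l'_{s,\tau}$ produced by Theorem \ref{T1} for the parameters $y$ and $y'$, and then to carry that bound through the $\mbox{ess}\inf$ defining $L$ and $L'$ and the running supremum defining $A$ and $A'$. So, fix $s\in[0,T]$ and a stopping time $\tau>s$; by Theorem \ref{T1}, $l_{s,\tau}$ and $l'_{s,\tau}$ are the $\mathcal{F}_s$-measurable random variables with
$$\E\left\{X_s-X_\tau|\mathcal{F}_s\right\}=\E\left\{\int_s^\tau f(u,y_u,l_{s,\tau})du+\int_s^\tau g(u,y_u)dB_u\Big|\mathcal{F}_s\right\}$$
and the same identity with $(y_u,l_{s,\tau})$ replaced by $(y'_u,l'_{s,\tau})$. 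Subtracting, the common term $\E\{X_s-X_\tau|\mathcal{F}_s\}$ cancels, and after writing $f(u,y_u,l_{s,\tau})-f(u,y'_u,l'_{s,\tau})=[f(u,y_u,l_{s,\tau})-f(u,y_u,l'_{s,\tau})]+[f(u,y_u,l'_{s,\tau})-f(u,y'_u,l'_{s,\tau})]$ I would argue, exactly as in the preceding lemma, on the $\mathcal{F}_s$-measurable sets $\{l_{s,\tau}>l'_{s,\tau}\}$ and $\{l_{s,\tau}<l'_{s,\tau}\}$ separately: on each, the first bracket has constant sign and absolute value $\ge k|l_{s,\tau}-l'_{s,\tau}|$ by ({\bf A2})--(iv), with $|l_{s,\tau}-l'_{s,\tau}|$ pulled out of the conditional expectation since both $l_{s,\tau}$ and $l'_{s,\tau}$ are $\mathcal{F}_s$-measurable, while the second bracket and the $g$-difference are controlled by the Lipschitz bound ({\bf A2})--(iii). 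This leads to
$$k|l_{s,\tau}-l'_{s,\tau}|\,\E\{\tau-s|\mathcal{F}_s\}\le L\|y-y'\|_\infty\,\E\{\tau-s|\mathcal{F}_s\}+\left|\E\left\{\int_s^\tau[g(u,y_u)-g(u,y'_u)]dB_u\Big|\mathcal{F}_s\right\}\right|.$$

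For the stochastic-integral term I would proceed as in the preceding lemma, using conditional Jensen together with the conditional It\^{o} isometry to bound its absolute value by $\big[\E\{\int_s^\tau|g(u,y_u)-g(u,y'_u)|^2du\,|\,\mathcal{F}_s\}\big]^{1/2}$, hence by $L\|y-y'\|_\infty\big[\E\{\tau-s|\mathcal{F}_s\}\big]^{1/2}\le L\|y-y'\|_\infty\sqrt T$, using ({\bf A2})--(iii) and $\tau-s\le T$. Dividing the last display by $\E\{\tau-s|\mathcal{F}_s\}$, squaring, and using $(a+b)^2\le 2(a^2+b^2)$ to merge the two sign cases (this is where the factor $\sqrt2$ appears, in the same way the factor $\sqrt3$ came from three terms in the preceding lemma), I expect to reach the uniform estimate
$$|l_{s,\tau}-l'_{s,\tau}|\le\frac{\sqrt2\,L}{k}\big(1+\sqrt T\big)\|y-y'\|_\infty .$$

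It then remains to transport this. Since $L_s=\mbox{ess}\inf_{\tau>s}l_{s,\tau}$ and $L'_s=\mbox{ess}\inf_{\tau>s}l'_{s,\tau}$, the elementary inequality $|\mbox{ess}\inf_\tau a_\tau-\mbox{ess}\inf_\tau b_\tau|\le\mbox{ess}\sup_\tau|a_\tau-b_\tau|$ gives $|L_s-L'_s|\le\frac{\sqrt2\,L}{k}(1+\sqrt T)\|y-y'\|_\infty$ for every $s$; and since $A_t=\sup_{0\le s\le t^+}L_s$ and $A'_t=\sup_{0\le s\le t^+}L'_s$ by Theorem \ref{T2}, while $|\sup_s L_s-\sup_s L'_s|\le\sup_s|L_s-L'_s|$, the claimed bound on $|A_t-A'_t|$ follows.

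The step I expect to be the real obstacle is the backward stochastic-integral term $\E\{\int_s^\tau[g(u,y_u)-g(u,y'_u)]dB_u|\mathcal{F}_s\}$: because of the doubly stochastic filtration $\mathcal{F}_s=\mathcal{F}_s^W\vee\mathcal{F}_{s,T}^B$ this conditional expectation need not vanish, and after the It\^{o} isometry one is left with a factor of order $[\E\{\tau-s|\mathcal{F}_s\}]^{1/2}$ that has to be kept under control relative to the $\E\{\tau-s|\mathcal{F}_s\}$ by which one divides — in particular for stopping times $\tau$ close to $s$; it is precisely the a priori bound $\tau-s\le T$ that is used here to absorb it into the constant $\frac{\sqrt2\,L}{k}(1+\sqrt T)$.
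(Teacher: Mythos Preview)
Your overall strategy---bounding $|l_{s,\tau}-l'_{s,\tau}|$ via the defining relation from Theorem~\ref{T1}, splitting on the $\mathcal{F}_s$-events $\{l_{s,\tau}\gtrless l'_{s,\tau}\}$, using ({\bf A2})--(iv) for the $l$-increment and ({\bf A2})--(iii) for the $y$- and $g$-increments together with the conditional isometry, and then passing through the $\mbox{ess}\inf$ and the running $\sup$---is exactly the paper's route. The paper squares first and then divides, reaching the intermediate estimate
\[
|l_{s,\tau}-l'_{s,\tau}|\le\frac{\sqrt2\,L}{k}\Big(1+\big[\E\{\tau-s\mid\mathcal{F}_s\}\big]^{-1/2}\Big)\|y-y'\|_\infty,
\]
which is the same bound your argument produces before the final step.

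However, your stated resolution of the obstacle you correctly flag does not work: from $\tau-s\le T$ one obtains $\E\{\tau-s\mid\mathcal{F}_s\}\le T$ and hence $\big[\E\{\tau-s\mid\mathcal{F}_s\}\big]^{-1/2}\ge T^{-1/2}$, a \emph{lower} bound on the singular factor, not the upper bound $\sqrt T$ that the claimed constant requires. Nothing in the setup keeps $\tau$ away from $s$, so $\mbox{ess}\sup_{\tau>s}\big[\E\{\tau-s\mid\mathcal{F}_s\}\big]^{-1/2}$ is in general unbounded, and bounding $\big[\E\{\tau-s\mid\mathcal{F}_s\}\big]^{1/2}$ by $\sqrt T$ \emph{before} dividing, as you do, only makes the quotient worse. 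The paper's own proof makes precisely this unexplained jump at the very last line (passing directly from $\mbox{ess}\sup_{\tau>s}\big[\E\{\tau-s\mid\mathcal{F}_s\}\big]^{-1/2}$ to $\sqrt T$), so you have faithfully replicated the paper's argument---including this step---but the explicit justification you offer via $\tau-s\le T$ has the inequality pointing the wrong way.
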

\begin{proof}
Again, we fix $s\in [0,T]$ and let $\tau\in \mathcal{M}(0,T)$ be such that $\tau>s$ a.s. Let us consider, according to Theorem 2.1, $l_{s,\tau},\, l'_{s,\tau}$ two $\mathcal{F}_s$-measurable random variables such that

$\displaystyle \E(X_s-X_{\tau}|\mathcal{F}_{s})=\E\left\{\int_{s}^{\tau}f\left(u,y_u,l_{s,\tau}\right)du
+\int_{s}^{\tau}g\left(u,y_u\right)dB_u|\mathcal{F}_s\right\}$
\begin{eqnarray}
=\E\left\{\int_{s}^{\tau}f\left(u,y'_u,l'_{s,\tau}\right)du+\int_{s}^{\tau}g\left(u,y'_u\right)dB_u
|\mathcal{F}_s\right\}.\label{eq6}
\end{eqnarray}
Let us denote $D^{\tau}_{s}=\left\{\omega/ l_{s,\tau}(\omega)>l'_{s,\tau}(\omega)\right\}$, thus $D^{\tau}_{s}\in \mathcal{F}_s$, for any stopping times $\tau>s$. Since ${\bf 1}_{D^{\tau}_{s}}$ is $\mathcal{F}_s$-measurable, it follows from \eqref{eq6} that
\\
$\displaystyle \left[\E\left(\int_{s}^{\tau}|f\left(u,y_u,l_{s,\tau}\right)-f\left(u,y_u,l'_{s,\tau}\right)|{\bf 1}_{D^{\tau}_{s}}du|\mathcal{F}_s\right)\right]^2$
\begin{eqnarray}
=\left[\E\left(\int_{s}^{\tau}[f\left(u,y'_u,l'_{s,\tau}\right)-f\left(u,y_u,l'_{s,\tau}\right)]{\bf 1}_{D^{\tau}_{s}}du
+\int_{s}^{\tau}(g\left(u,y'_u\right)-g\left(u,y_u\right)){\bf 1}_{D^{\tau}_{s}}dB_u|\mathcal{F}_s\right)\right]^2.\nonumber\\\label{eq60}
\end{eqnarray}
By assumption $({\bf A2})$-$(iv)$, we have
\begin{eqnarray}
\left[\E\left(\int_{s}^{\tau}|f\left(u,y_u,l_{s,\tau}\right)-f\left(u,y_u,l'_{s,\tau}\right)|{\bf 1}_{D^{\tau}_{s}}du|\mathcal{F}_s\right)\right]^2
\geq k^2|l_{s,\tau}-l'_{s,\tau}|^2[\E\left\{\tau-s|\mathcal{F}_s\right\}{\bf 1}_{D^{\tau}_{s}}]^2.\nonumber\\\label{eq6'}
\end{eqnarray}
Next, assumption ({\bf A2})-(iii) together with conditional expectation version of isometry
 property lead to
 \\
 $\displaystyle\left[\E\left(\int_{s}^{\tau}[f\left(u,y'_u,l'_{s,\tau}\right)-f\left(u,y_u,l'_{s,\tau}\right)]{\bf 1}_{D^{\tau}_{s}}du+\int_{s}^{\tau}(g\left(u,y'_u\right)-g\left(u,y_u\right)){\bf 1}_{D^{\tau}_{s}}dB_u|\mathcal{F}_s\right)\right]^2$
\begin{eqnarray}
&\leq & 2\left[\E\left(\int_{s}^{\tau}|f\left(u,y_u,l'_{s,\tau}\right)-f\left(u,y'_u,l'_{s,\tau}\right)|{\bf 1}_{D^{\tau}_{s}}du\right)\right]^2\nonumber\\
&&+2\left[\E\int_{s}^{\tau}|g\left(u,y'_u\right)-g\left(u,y_u\right)|^2{\bf 1}_{D^{\tau}_{s}}du|\mathcal{F}_s\right]\nonumber\\
&\leq & 2L^2\|y-y'\|^2_{\infty}[\E\left\{\tau-s|\mathcal{F}_s\right\}{\bf 1}_{D^{\tau}_{s}}]^2+2L^2\|y-y'\|^2_{\infty}\E\left(\tau-s|\mathcal{F}_s\right){\bf 1}_{D^{\tau}_{s}}.\label{eq6''}
\end{eqnarray}
Combining \eqref{eq6'} and \eqref{eq6''} with \eqref{eq60}, we obtain
\begin{eqnarray*}
k|l_{s,\tau}-l'_{s,\tau}|\E\left\{\tau-s|\mathcal{F}_s\right\}\leq \sqrt{2}L\|y-y'\|_{\infty}\E\left\{\tau-s|\mathcal{F}_s\right\}+\sqrt{2}L\|y-y'\|_{\infty}
[\E\left(\tau-s|\mathcal{F}_s\right)]^{1/2},
\end{eqnarray*}
on $D^{\tau}_{s}$. Thus,
\begin{eqnarray*}
|l_{s,\tau}-l'_{s,\tau}|\leq \frac{\sqrt{2}\,L}{k}(1+[\E\left\{(\tau-s)|\mathcal{F}_s\right\}]^{-1/2})\|y-y'\|_{\infty}
\end{eqnarray*}
on $D^{\tau}_{s}$, since $\tau>s$. Similarly, we can show that the inequality holds on the complement of $D^{\tau}_{s}$ as well. Therefore, we have
\begin{eqnarray}
|l_{s,\tau}-l'_{s,\tau}|\leq \frac{\sqrt{2}\,L}{k}(1+[\E\left\{\tau-s|\mathcal{F}_s\right\}]^{-1/2})\|y-y'\|_{\infty}.\label{eq7}
\end{eqnarray}
Next, since $L_s=\mbox{ess}\inf_{\tau>s}l_{s,\tau},\; L'_s=\mbox{ess}\inf_{\tau>s}l'_{s,\tau},\; A_t=\sup_{0\leq s\leq t}L_{s}$ and $A'_t=\sup_{0\leq s\leq t}L'_{s}$, we conclude from \eqref{eq7} that
\begin{eqnarray*}
|A_t-A'_t|=\left|\sup_{0\leq s\leq t}L_{s}-\sup_{0\leq s\leq t}L'_{s}\right|&\leq& \sup_{0\leq s\leq t}\left|\mbox{ess}\inf_{\tau>s}l_{s,\tau}-\mbox{ess}\inf_{\tau>s}l'_{s,\tau}\right|\\
&\leq&\sup_{0\leq s\leq t}\mbox{ess}\sup_{\tau>s}|l_{s,\tau}-l'_{s,\tau}|\\
&\leq &\sup_{0\leq s\leq t}\mbox{ess}\sup_{\tau>s}\frac{\sqrt{2}L}{k}\left[1+\left(\E\left\{(\tau-s)|\mathcal{F}_s\right\}\right)^{-1/2}\right]\|y-y'\|_{\infty}\\
&\leq & \frac{\sqrt{2}L}{k}(1+\sqrt{T})\|y-y'\|_{\infty}.
\end{eqnarray*}
\end{proof}
We are now ready to prove the main result of this section, the existence and uniqueness
of the solution to the VRBDSDE.
\begin{theorem}\label{T311}
Assume $({\bf A1}),\,({\bf A2})$ and $({\bf A3})$ hold. Assume further that $$ {2TL\left(1+\sqrt{2}\frac{K}{k}\left(1+\sqrt{T}\right)\right)+L\sqrt{2T}< 1},$$
then the VRBDSDE \eqref{eq11} admits a unique solution $(Y,A)$.
\end{theorem}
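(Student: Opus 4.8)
The plan is to realize the solution of the VRBDSDE as the fixed point of the map $\Phi$ on $\mathcal{S}^{\infty}([0,T],\R)$ introduced above, and to show that under the smallness condition $2TL\bigl(1+\sqrt{2}\tfrac{K}{k}(1+\sqrt{T})\bigr)+L\sqrt{2T}<1$ this map is a strict contraction for the sup-norm $\|\cdot\|_{\infty}$. First I would check that $\Phi$ is well defined, i.e. that for $y\in\mathcal{S}^{\infty}$ the triple $(Y,Z,A)$ produced by Theorems \ref{T1} and \ref{T2} applied to the (shifted) barrier $X$ with coefficients $f(\cdot,y_{\cdot},\cdot)$, $g(\cdot,y_{\cdot})$ exists and that $Y=\Phi(y)\in\mathcal{S}^{\infty}$; the $L^{\infty}$-bound on $Y$ follows by writing $Y_t=\E\{X_T+\int_t^T f(u,y_u,A_u)\,du+\int_t^T g(u,y_u)\,dB_u\mid\mathcal{F}_t\}$, using assumption (A3), the Lipschitz bounds (A2)(iii)--(iv), and the already-established a priori bound $\|A^0\|_{\infty}\le 3\sqrt3\,\Gamma/k$ from the first Lemma together with the $A$-estimate of Lemma \ref{L312} comparing $A$ to $A^0$ (taking $y'\equiv 0$). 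Boundedness of $X$ is not assumed, but $X$ is of class (D) and the representation expresses $X_S$ through $f,g$ and the bounded $A$, so the relevant bounds are the ones coming from (A3).

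Next comes the contraction estimate, which is the heart of the argument. Take $y,y'\in\mathcal{S}^{\infty}$ with associated data $(Y,Z,A)$ and $(Y',Z',A')$. Subtracting the two representations,
\begin{eqnarray*}
Y_t-Y'_t=\E\Bigl\{\int_t^T\bigl[f(u,y_u,A_u)-f(u,y'_u,A'_u)\bigr]\,du+\int_t^T\bigl[g(u,y_u)-g(u,y'_u)\bigr]\,dB_u\ \Big|\ \mathcal{F}_t\Bigr\}.
\end{eqnarray*}
Bound the $f$-difference by splitting it as $f(u,y_u,A_u)-f(u,y'_u,A_u)$ plus $f(u,y'_u,A_u)-f(u,y'_u,A'_u)$ and applying (A2)(iii) to the first ($\le L\|y-y'\|_{\infty}$) and (A2)(iv) to the second ($\le K|A_u-A'_u|$), then insert the pointwise bound $|A_u-A'_u|\le \frac{\sqrt2\,L}{k}(1+\sqrt T)\|y-y'\|_{\infty}$ from Lemma \ref{L312}. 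The $g$-difference is $\le L\|y-y'\|_{\infty}$ by (A2)(iii). Taking conditional expectation, estimating $\bigl|\int_t^T(g(u,y_u)-g(u,y'_u))\,dB_u\bigr|$ by its $L^2$-norm via the (conditional) Itô isometry as done in the proof of the first Lemma — which produces the $\sqrt{T}$ factor and the coefficient $L\sqrt{2T}$ after a Cauchy--Schwarz step — and collecting the $du$-integrals over an interval of length $\le T$, one arrives at
\begin{eqnarray*}
\|Y-Y'\|_{\infty}\le\Bigl[2TL\Bigl(1+\sqrt2\,\tfrac{K}{k}(1+\sqrt T)\Bigr)+L\sqrt{2T}\Bigr]\,\|y-y'\|_{\infty}.
\end{eqnarray*}
By hypothesis the bracketed constant is $<1$, so $\Phi$ is a contraction on the Banach space $(\mathcal{S}^{\infty}([0,T],\R),\|\cdot\|_{\infty})$ and the Banach fixed point theorem yields a unique $Y$ with $\Phi(Y)=Y$; together with its associated $(Z,A)$ this $Y$ solves \eqref{eq11}, and satisfies $Y_t\le X_t$, $Y_T=X_T=\xi$, and the flat-off condition $\E\int_0^T|Y_t-X_t|\,dA_t=0$ because these are built into Theorems \ref{T1}--\ref{T2}.

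Finally I would address uniqueness of the full solution $(Y,A)$, since the contraction only delivers uniqueness of $Y$. Given any solution $(Y,A)$ of \eqref{eq11}(i)--(iii), observe that $Y$ is necessarily a fixed point of $\Phi$ — indeed, feeding $y:=Y$ into the variant Skorohod problem reproduces exactly equations \eqref{eq11}, and by the uniqueness part of Theorem \ref{T2} the reflecting process is forced to be $A_t=\sup_{0\le s\le t^+}L_s$ with $L$ the process attached to $Y$ in Theorem \ref{T1}; hence $\Phi(Y)=Y$, so $Y$ is the unique fixed point, and then $A$ is uniquely determined by $Y$ through the characterization in Theorem \ref{T1}(ii)--(iii). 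The main obstacle I anticipate is the bookkeeping in the contraction step: one must be careful that the $A$-dependence is handled through Lemma \ref{L312} with the right constant, that the stochastic-integral term is controlled uniformly in $t$ (using the martingale/isometry argument rather than a naive pointwise bound), and that the two contributions combine into precisely the stated constant — the $2T$ rather than $T$ in front typically coming from an elementary inequality $(a+b)^2\le 2a^2+2b^2$ used when separating the drift and martingale parts, exactly as in the proof of the first Lemma.
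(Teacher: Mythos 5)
Your overall route is the paper's route: define $\Phi$ on $\mathcal{S}^{\infty}$ through the frozen variant Skorohod problem, show $\Phi$ maps $\mathcal{S}^{\infty}$ to itself using the a priori bound $\|A^0\|_{\infty}\le 3\sqrt3\,\Gamma/k$ and Lemma \ref{L312} with $y'\equiv0$, prove the contraction estimate with exactly the constant $2TL\bigl(1+\sqrt2\,\tfrac{K}{k}(1+\sqrt T)\bigr)+L\sqrt{2T}$ by splitting the $f$-increment ((A2)(iii) plus (A2)(iv) and Lemma \ref{L312}) and controlling the backward stochastic integral by the conditional isometry, and finally get uniqueness of $A$ by feeding the fixed point $Y$ back into the Skorohod problem with $f^Y(s,l)=f(s,Y_s,l)$, $g^Y(s)=g(s,Y_s)$ and invoking the uniqueness in Theorem \ref{T2}. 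All of that matches the paper.

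The gap is in the sentence claiming that $Y_t\le X_t$ and the flat-off condition $\E\int_0^T|Y_t-X_t|\,dA_t=0$ hold ``because these are built into Theorems \ref{T1}--\ref{T2}.'' They are not: Theorem \ref{T2} only asserts the representation $Y_t=\E\{X_T+\int_t^T f(u,A_u)du+\int_t^T g(u)dB_u\mid\mathcal{F}_t\}$ with $A_t=\sup_{0\le s\le t^+}L_s$; neither the constraint nor the flat-off condition is part of its statement, and the paper devotes a separate verification to them at the fixed point. Concretely, $Y_t\le X_t$ follows by comparing the representation of $Y$ with the representation of $X$ from Theorem \ref{T1} and using that $f$ is decreasing in $l$ together with $A_s=\sup_{0\le v\le s^+}L_v\ge\sup_{t\le v\le s}L_v$ for $s\ge t$; and the flat-off condition requires writing $\E\int_0^T(Y_t-X_t)\,dA_t$ as a double integral of differences of $f$ evaluated at the two running suprema, then applying optional projection, Fubini, and the fact that the increasing path $u\mapsto\sup_{t\le v\le u}L_v$ has only countably many discontinuities (hence Lebesgue-negligible), as in Ma--Wang. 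Without these two arguments your construction only produces a fixed point of the representation equation, not a solution of the VRBDSDE \eqref{eq11} in the sense of (i)--(iii); you should supply them (or at least cite them explicitly) rather than attribute them to the statements of Theorems \ref{T1}--\ref{T2}.
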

\begin{proof}
First, let us show that the mapping $\Phi$ defined by \eqref{eq2} is from $\mathcal{S}^{\infty}$ to itself. To do this, we note that by using assumption $({\bf A1})$ and Lemmas 3.1 and 3.2, we derive
\begin{eqnarray}
|Y_t|^2=|\Phi(y)_t|^2\leq 3\E\left\{|\xi|+\left(\int^{T}_{t}|f(s,y_s,A_s)|ds\right)^2
+\left(\int_t^Tg(s,y_s)dB_s\right)^2|\mathcal{F}_t\right\}.\label{eq100}
\end{eqnarray}
We have

$\displaystyle \E\left\{\left(\int^{T}_{t}f(s,y_s,A_s)ds\right)^2|\mathcal{F}_t\right\}$
\begin{eqnarray}
&\leq& 4T^2\left(K^2\|A-A^0\|^2_{\infty}+L^2\|y\|^2_{\infty}+K^2\|A^0\|^2_{\infty}+\Gamma^2\right)\nonumber\\
&\leq& 4T^2L^2\left(1+2\frac{K^2}{k^2}\left(1+\sqrt{T}\right)\|y\|^2_{\infty}\right)+4T^2\left(1+27\frac{K^2}{k^2}\right)\Gamma^2\label{eq101}
\end{eqnarray}
and
\begin{eqnarray}
\E\left\{\left(\int^{T}_{t}g(s,y_s)|ds\right)^2|\mathcal{F}_t\right\}
&\leq& 2\E\left\{\left(\int_0^T|g(s,0)|^2ds\right)|\mathcal{F}_t\right\}+2L^2T\|y\|_{\infty}^2.\label{eq102}
\end{eqnarray}
It follows from \eqref{eq100}, \eqref{eq101} and \eqref{eq102} that
\begin{eqnarray*}
|Y_t|&\leq& \|\xi\|+\sqrt{2}\left[\E\left(\int_0^T|g(s,0)|^2ds|\mathcal{F}_t\right)\right]^{1/2}
+L\left[2T\left(1+\sqrt{2}\frac{K}{k}\left(1+\sqrt{T}\right)\right)+\sqrt{2T}\right]\|y\|_{\infty}\\
&&+2T\left(1+3\sqrt{3}\frac{K}{k}\right)\Gamma.
\end{eqnarray*}
As it is known by assumption that $\xi$ belongs to $\L^{\infty}$, we deduce from $({\bf A3})$-$(i)$ that $Y=\Phi(y)$ belongs to $S^{\infty}$.
Now, let us  prove that $\Phi$ is a contraction. For $y,\, y'\in S^{\infty}$, we denote $Y=\Phi(y)$ and $Y'=\Phi(y')$. Then, for $t\in[0,T]$, we have
\begin{eqnarray}\label{eq8}
|\Phi(y)-\Phi(y')|^2&=&\left|\E\left\{\int_t^T[f(s,y_s,A_s)-f(s,y'_s,A'_s)]ds+\int_t^T[g(s,y_s)-g(s,y'_s)]dB_s|\mathcal{F}_t\right\}\right|^2 \nonumber \\
&\leq&  2\left|\E\left\{\int_t^T|f(s,y_s,A_s)-f(s,y'_s,A'_s)|ds|\mathcal{F}_t\right\}\right|^2\nonumber \\&&+2\E\left\{\left|\int_t^T[g(s,y_s)-g(s,y'_s)]dB_s\right|^2|\mathcal{F}_t\right\}.
\end{eqnarray}
Applying assumption on $f$ and Lemma 3.2, we derive that
\begin{eqnarray}
\left|\E\left(\int_t^T|f(s,y_s,A_s)-f(s,y'_s,A'_s)|ds|\mathcal{F}_t\right)\right|^2&\leq& 2T^2\left(L^2\|y'-y\|^2_{\infty}+K^2\|A-A'\|_{\infty}^2\right)\nonumber\\
&\leq& 2T^2\left[L^2+K^2\frac{2L^2}{k^2}\left(1+\sqrt{T}\right)^2\right]\|y'-y\|_{\infty}.\nonumber\\\label{eq9}
\end{eqnarray}
Moreover, it follows from conditional expectation version of isometry property that
\begin{eqnarray}
\E\left\{\left|\int_t^T[g(s,y_s)-g(s,y'_s)]dB_s\right|^2|\mathcal{F}_t\right\}&=& \E\left\{\left(\int_t^T|g(s,y_s)-g(s,y'_s)|^2ds\right)|\mathcal{F}_t\right\}\nonumber\\
&\leq&L^2T\|y-y'\|^2_{\infty}\label{eq10}.
\end{eqnarray}
Finally, putting \eqref{eq9} and \eqref{eq10} into \eqref{eq8}, we obtain
\begin{eqnarray*}
|\Phi(y)-\Phi(y')|\leq \left[2TL\left(1+\sqrt{2}\frac{K}{k}\left(1+\sqrt{T}\right)\right)+L\sqrt{2T}\right]\|y-y'\|_{\infty}.
\end{eqnarray*}
Since we assume that $2TL\left(1+\sqrt{2}\frac{K}{k}\left(1+\sqrt{T}\right)\right)+L\sqrt{2T}< 1$, it is not difficult to see that $\Phi$ is a contraction.

Let us denote by $Y\in \mathcal{S}^{\infty}$ the unique fixed point and by $A$ the associating reflecting process defined by $A_t=\sup_{0\leq v\leq t^+}L_{v}$, where $L$ satisfies  the representation
\begin{eqnarray}
X_{t}=\E\left\{\xi+\int_{t}^{T}f\left(s,Y_s,\sup_{t\leq v\leq s}L_v\right)ds+\int_{t}^{T}g\left(s,Y_s\right)dB_u|\mathcal{F}_t\right\}.\label{eq11'}
\end{eqnarray}
Let us now prove that $(Y,A)$ is the solution to the VRBDSDE \eqref{eq11}. For this instance, it follows from \eqref{eq11'}, the definition of $A$, and the monotonicity of $f$ on the third variable that for all $t\in [0,T]$,
\begin{eqnarray*}
Y_{t}&=&\E\left\{\xi+\int_{t}^{T}f\left(s,Y_s,A_s\right)ds+\int_{t}^{T}g\left(s,Y_s\right)dB_s|\mathcal{F}_t\right\}\nonumber\\
&\leq & \E\left\{\xi+\int_{t}^{T}f\left(s,Y_s,\sup_{t\leq v\leq s}L_v\right)ds+\int_{t}^{T}g\left(s,Y_s\right)dB_s|\mathcal{F}_s\right\}=X_t.
\end{eqnarray*}
To end the proof of existence, it remains to show that the flat-off conditions holds. The properties of optional projection and definition of $A$ and $L$ lead to
\begin{eqnarray*}
\E\int_0^T(Y_t-X_t)dA_t&=&\E\int_0^T\left\{\int_{t}^{T}\left[f\left(s,Y_s,\sup_{0\leq v\leq s^+}L_v\right)-f\left(s,Y_s,\sup_{t\leq v\leq s}L_v\right)\right]ds\right\}dA_t.
\end{eqnarray*}
Next, using the Fubini theorem and the fact that Lebesgue measure does not charge the discontinuities of the path $u\mapsto\sup_{t\leq v\leq u}L_v$, which are only countably many, we have
\begin{eqnarray*}
\E\int_0^T(Y_t-X_t)dA_t&=&\E\int_0^T\left\{\int_{0}^{s}\left[f\left(s,Y_s,\sup_{0\leq v\leq s^+}L_v\right)-f\left(s,Y_s,\sup_{t\leq v\leq s^+}L_v\right)\right]dA_t\right\}ds,\nonumber\\\label{eq12}
\end{eqnarray*}
which provide by the same argument used in \cite{MW} that
\begin{eqnarray*}
\E\int_0^T|Y_t-X_t|dA_t=0.
\end{eqnarray*}
For the uniqueness, let us suppose that there is another solution $(Y',A')$ to the VRBDSDE such that $Y'_t\leq X_t,\; t\in[0,T]$, and
\begin{eqnarray*}
Y'_{t}=\E\left\{\xi+\int_{t}^{T}f\left(s,Y'_s,A'_s\right)ds+\int_{t}^{T}g\left(s,Y'_s\right)dB_s|\mathcal{F}_t\right\},\;\;\;\E\int_0^T|Y'_t-X_t|dA_t=0.
\end{eqnarray*}
Since both $Y$ and $Y'$ are the unique fixed points of the mapping $\Phi$, it follows that $Y=Y'$. Let us consider the stochastic variant Skorohod problem
\begin{eqnarray}
&&\widetilde{Y}_{t}=\E\left\{\xi+\int_{t}^{T}f^Y\left(s,\widetilde{A}_s\right)ds+\int_{t}^{T}g^Y\left(s\right)dB_s|\mathcal{F}_t\right\},\nonumber\\
&&\widetilde{Y}_t\leq X_t,\;\;\;\;\; Y_T=X_T=\xi,\label{eq13}\\
&&\E\int_0^T|\widetilde{Y}_t-X_t|d\widetilde{A}_t=0,\nonumber
\end{eqnarray}
where $f^Y\left(s,l\right)=f\left(s,Y_s,l\right)$ and $g^Y\left(s\right)=g\left(s,Y_s\right)$. Thanks to Theorem \ref{T2}, there exists a unique pair of process
$(\widetilde{Y},\widetilde{A})$ that solves the stochastic variant Skorohod problem. Moreover, since $(Y,A)$ and $(Y',A')$ are the solutions to the variant BDSDE
\eqref{eq13}, it follows that $Y_t=\widetilde{Y}_t$ and $A_t=\widetilde{A}_t=A'_t,\; t\in [0,T]$, a.s., which proves the uniqueness, whence the theorem.
\end{proof}
\begin{corollary}
Suppose that $(Y,A)$ is a solution to VRBDSDE with generator $f$ and $g$ and upper boundary $X$. Then $A_{0^-}=-\infty$ and $Y_0=X_0$.
\end{corollary}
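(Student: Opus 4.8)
The assertion $A_{0^-}=-\infty$ is already built into requirement (iii) in the definition of a solution, so there is nothing to prove there; alternatively, the unique solution produced in Theorem~\ref{T311} has $A_t=\sup_{0\le v\le t^+}L_v$ with $L$ the optional process supplied by Theorem~\ref{T1} applied to $f^Y(s,\cdot):=f(s,Y_s,\cdot)$ and $g^Y(s):=g(s,Y_s)$, so that $A_{0^-}=\sup_{0\le v\le 0^-}L_v=-\infty$ under the convention $\sup\emptyset=-\infty$. Hence the real content is the equality $Y_0=X_0$, and since being a solution already forces $Y_0\le X_0$, only $Y_0\ge X_0$ requires an argument.

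The plan is to exploit the flat-off condition together with the boundary convention $A_{0^-}=-\infty$. First I would observe that $X_t-Y_t\ge 0$ on $[0,T]$ and $A$ is increasing, so the nonnegative random variable $\int_0^T(X_t-Y_t)\,dA_t$ has zero expectation and therefore vanishes almost surely. Next, since $A$ is c\`{a}dl\`{a}g its initial value $A_0$ is finite while $A_{0^-}=-\infty$, so the Stieltjes measure $dA$ on $[0,T]$ puts infinite mass at the origin, $dA(\{0\})=A_0-A_{0^-}=+\infty$. The atom at $0$ alone then contributes $(X_0-Y_0)\cdot(+\infty)$ to $\int_0^T(X_t-Y_t)\,dA_t$, and as the whole integral is $0$ a.s.\ this term must be $0$, which is possible only if $X_0-Y_0=0$; together with $Y_0\le X_0$ this gives $Y_0=X_0$.

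A second, equivalent route avoids discussing the mass of $dA$ near $0$ and instead compares the representations of $Y_0$ and $X_0$ coming from the construction in Theorem~\ref{T311}, namely $X_0=\E\{\xi+\int_0^Tf(s,Y_s,\sup_{0\le v\le s}L_v)\,ds+\int_0^Tg(s,Y_s)\,dB_s\,|\,\mathcal{F}_0\}$ and $Y_0=\E\{\xi+\int_0^Tf(s,Y_s,A_s)\,ds+\int_0^Tg(s,Y_s)\,dB_s\,|\,\mathcal{F}_0\}$ with $A_s=\sup_{0\le v\le s^+}L_v$. The nondecreasing right-continuous path $s\mapsto A_s$ has at most countably many discontinuities, and off that Lebesgue-null set one has $\sup_{0\le v\le s^+}L_v=\sup_{0\le v\le s}L_v$; hence $f(s,Y_s,A_s)=f(s,Y_s,\sup_{0\le v\le s}L_v)$ for Lebesgue-a.e.\ $s$, the two $ds$-integrals agree almost surely (both finite by the integrability in Theorem~\ref{T1}~(i)), and therefore $Y_0=X_0$. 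This is precisely the device already used for the flat-off condition in the proof of Theorem~\ref{T311}: Lebesgue measure does not charge the countably many discontinuities of the running supremum.

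I do not expect a genuine obstacle, as this is a true corollary. The only points deserving a line of care are the handling of the $-\infty$ endpoint so that the infinite atom of $dA$ at the origin is correctly included in $\int_0^T\cdot\,dA$, and the harmless replacement of $\sup_{0\le v\le s^+}L_v$ by $\sup_{0\le v\le s}L_v$ inside the $ds$-integral.
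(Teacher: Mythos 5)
Your second route is exactly the paper's proof: the authors compare the representation $X_0=\E\{\xi+\int_0^Tf(s,Y_s,\sup_{0\le v\le s}L_v)\,ds+\int_0^Tg(s,Y_s)\,dB_s\}$ with $Y_0=\E\{\xi+\int_0^Tf(s,Y_s,\sup_{0\le v\le s^+}L_v)\,ds+\int_0^Tg(s,Y_s)\,dB_s\}$ and conclude $Y_0=X_0$ because the running supremum has only countably many discontinuities, which are Lebesgue-negligible; and, like you, they dispose of $A_{0^-}=-\infty$ by treating it as part of the construction/definition rather than something to prove. Your first route (the flat-off condition plus an infinite atom of $dA$ at the origin) is a genuinely different and attractively ``intrinsic'' argument, since it would apply to an arbitrary solution without invoking the fixed-point construction; but be aware it rests on two points the paper never commits to: that the flat-off integral $\int_0^T|Y_t-X_t|\,dA_t$ is taken over the closed interval $[0,T]$ so that the measure $dA$ carries the mass $A_0-A_{0^-}=+\infty$ at $t=0$, and that $A_0$ is finite. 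Since $A_0=\sup_{0\le v\le 0^+}L_v$ and $L$ is allowed to take the value $-\infty$, the case $A_0=-\infty$ (or a reading of the integral over $(0,T]$) makes that route inconclusive, whereas the representation comparison still yields $Y_0=X_0$. So your proposal is correct, with the paper's own argument appearing as your fallback; just flag the convention on $dA(\{0\})$ and the finiteness of $A_0$ if you want to lead with the atom argument.
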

\begin{proof}
Since the existence and uniqueness proof depends heavily on the well-posedness result of the extended stochastic representation theorem, we must require that
$A_{0^-}=-\infty$. On the other hand, since $Y$ is a fixed point of the mapping $\Phi$ defined by \eqref{eq2}, it not difficult to see that $Y_0$ and $X_0$
satisfy the following equalities:
\begin{eqnarray*}
X_{0}&=&\E\left\{\xi+\int_{0}^{T}f\left(s,Y_s,\sup_{0\leq v\leq s}L_v\right)ds+\int_{0}^{T}g\left(s,Y_s\right)dB_s\right\},\\
Y_{0}&=&\E\left\{\xi+\int_{0}^{T}f\left(s,Y_s,A_s\right)ds+\int_{0}^{T}g\left(s,Y_s\right)dB_s\right\}\\
&=&\E\left\{\xi+\int_{0}^{T}f\left(s,Y_s,\sup_{0\leq v\leq s^+}L_v\right)ds+\int_{0}^{T}g\left(s,Y_s\right)dB_s\right\}.
\end{eqnarray*}
Hence, by the same argument that the paths of the increasing process $u\mapsto\sup_{t\leq v\leq u}L_v$ has only countably many discontinuities, which are negligible
under the Lebesgue measure, we prove that $Y_0=X_0$.
\end{proof}
\subsection{Comparison theorems}
This section is devoted to study the comparison theorem of the VRBDSDE, one of the very important tools in the theory of BSDEs. Let us remark that our
method follows closely to one appeared in \cite{MW}, which is quite different from all the existing arguments in the BSDE literature.

To state, let us consider the following two VRBDSDEs for $i=1,2$,
\begin{eqnarray}
&&Y^i_{t}=\E\left\{\xi^i+\int_{t}^{T}f^i\left(s,Y^i_s,A^i_s\right)ds+\int_{t}^{T}g\left(s,Y^i_s\right)dB_s|\mathcal{F}_t\right\},\nonumber\\
&&Y^i_t\leq X^i_t,\;\;\;\;\; Y^i_T=X^i_T=\xi^i,\label{eq14}\\
&&\E\int_0^T|Y^i_t-X^i_t|dA^i_t=0.\nonumber
\end{eqnarray}
In the sequel, we call $(f^i,g,X^i),\; i=1,2$ as the "parameters" of the VRBDSDE \eqref{eq14}, $i=1,2$, respectively. We also define the two following stopping times
\begin{eqnarray}
\mu&=&\inf\left\{t\in[0,T),\; A_t^2>A^1_t+\varepsilon\right\}\wedge T;\nonumber\\
\tau&=&\inf\left\{t\in[\mu,T),\; A_t^1>A^2_t-\frac{\varepsilon}{2}\right\}\wedge T.\label{eq15}
\end{eqnarray}
We recall the following result appear in \cite{MW}.
\begin{lemma}\label{L321}
The stopping times $\mu$ and $\tau$ defined by \eqref{eq15} have the standing properties:
\begin{description}
\item [\rm (i)] $\; \mu$ and $\tau$ are points of increase for $A^2$ and $A^1$, respectively. In other word, for any $\delta>0$, it holds that $A^2_{\mu^-}<A^2_{\mu+\delta}$ and
$A^1_{\tau^-}<A^1_{\tau+\delta}$.
\item [\rm (ii)] $\; \P(\mu<\tau)=1$, and $A^1_{t}\leq A^2_{t}-\frac{\varepsilon}{2}$, for all $t\in[\mu,\tau],\, \P$-a.s.,
\item [\rm (iiii)]  it holds that $Y^2_{\mu}=X^2_{\mu}$ and $Y^1_{\tau}=X^1_{\tau},\; \P$-a.s.
\end{description}
\end{lemma}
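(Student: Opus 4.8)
The plan is to read all three assertions as statements about the paths of the c\`adl\`ag increasing processes $A^1,A^2$ together with the defining identities \eqref{eq14} and the definitions \eqref{eq15} of $\mu,\tau$, working throughout on the event $\{\mu<T\}$ (on $\{\mu=T\}$ the claims are vacuous, and only $\{\mu<T\}$ is relevant to the comparison argument). For (i), set $h_t=A^2_t-A^1_t$, so $\mu=\inf\{t\in[0,T):h_t>\varepsilon\}\wedge T$, and split into two cases. If $h_\mu>\varepsilon$, then $h_s\le\varepsilon$ for $s<\mu$ gives $h_{\mu^-}\le\varepsilon<h_\mu$, and since $A^1$ is increasing this forces $A^2_{\mu^-}<A^2_\mu\le A^2_{\mu+\delta}$ for every $\delta>0$. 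If instead $h_\mu\le\varepsilon$, the infimum provides $t_n\downarrow\mu$ with $t_n>\mu$ and $h_{t_n}>\varepsilon$, so right-continuity forces $h_\mu=\lim_n h_{t_n}=\varepsilon$; were $\mu$ not a point of increase of $A^2$ there would be $\delta>0$ with $A^2$ flat on $[\mu,\mu+\delta]$, whence $h_t\le A^2_\mu-A^1_\mu=\varepsilon$ on that interval, contradicting $h_{t_n}>\varepsilon$. The statement for $\tau$ and $A^1$ follows verbatim with $g_t=A^1_t-A^2_t$ and the level $-\varepsilon/2$: either $g$ jumps up at $\tau$, which (as $A^2$ is increasing) forces a jump of $A^1$ at $\tau$, or $g_\tau=-\varepsilon/2$ and a flat stretch of $A^1$ just after $\tau$ is incompatible with the approximating sequence.

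For (ii), the same case analysis already yields $h_\mu\ge\varepsilon$ on $\{\mu<T\}$, i.e.\ $g_\mu\le-\varepsilon<-\varepsilon/2$; right-continuity of $g$ then gives a (random) $\eta>0$ with $g<-\varepsilon/2$ on $[\mu,\mu+\eta)$, so no point of $[\mu,\mu+\eta)$ belongs to the set defining $\tau$ and hence $\tau\ge\mu+\eta>\mu$ $\P$-a.s. The inequality $A^1_t\le A^2_t-\varepsilon/2$ for $t\in[\mu,\tau)$ is then immediate, since such a $t$ is not in that defining set; it extends to $t=\tau$ on letting $t\uparrow\tau$ and using continuity of $A^1,A^2$ at $\tau$ (or, if $A^1$ jumps at $\tau$, reading $A^1_\tau$ as the left limit $A^1_{\tau^-}$).

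The real content is (iii), where I would use the variant-reflection structure rather than pathwise arguments. By the construction in Theorem~\ref{T311} (see \eqref{eq11'}), applied to the generator $l\mapsto f^2(s,Y^2_s,l)$ with diffusion coefficient $g(s,Y^2_s)$ and invoked at the stopping time $\mu$ (legitimate since Theorem~\ref{T1} produces the representation at every stopping time), there is a process $L^2$ with $A^2_t=\sup_{0\le v\le t^+}L^2_v$ such that $X^2_\mu=\E\{\xi^2+\int_\mu^T f^2(s,Y^2_s,\sup_{\mu\le v\le s}L^2_v)\,ds+\int_\mu^T g(s,Y^2_s)\,dB_s\mid\mathcal F_\mu\}$, while \eqref{eq14} gives the same formula for $Y^2_\mu$ with $\sup_{\mu\le v\le s}L^2_v$ replaced by $A^2_s$. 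Subtracting, $X^2_\mu-Y^2_\mu$ is the conditional expectation of $\int_\mu^T[f^2(s,Y^2_s,\sup_{\mu\le v\le s}L^2_v)-f^2(s,Y^2_s,A^2_s)]\,ds$. Since $\mu$ is a point of increase of $A^2$ by (i), one has $A^2_{\mu^-}\le\sup_{\mu\le v\le s}L^2_v$ for every $s>\mu$, so $\sup_{\mu\le v\le s}L^2_v=\sup_{0\le v\le s^+}L^2_v=A^2_s$ for Lebesgue-almost every $s$; as $f^2(s,Y^2_s,\cdot)$ is strictly decreasing, the integrand vanishes a.e.\ and $X^2_\mu=Y^2_\mu$. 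The identity $Y^1_\tau=X^1_\tau$ follows from the same computation with $i=1$ at the stopping time $\tau$, using that $\tau$ is a point of increase of $A^1$. (A more elementary but less self-contained alternative is the flat-off condition $\E\int_0^T|Y^i_t-X^i_t|\,dA^i_t=0$: a point of increase of $A^i$ lies in the support of $dA^i$, hence is approached by times at which $Y^i=X^i$, and one concludes using continuity of $Y^i$ and the regularity of $X^i$ in $({\bf A1})$.)

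I expect (iii) to be the main obstacle: parts (i)--(ii) are elementary, if slightly fiddly, facts about c\`adl\`ag monotone paths, whereas (iii) genuinely requires identifying ``$A^i$ increases at the stopping time'' with ``$Y^i$ meets the obstacle $X^i$ there''. The care needed is in invoking the representation at a \emph{stopping} time, in discarding the at most countably many (Lebesgue-negligible) discontinuities of $s\mapsto\sup_{\mu\le v\le s}L^2_v$, and in handling the harmless boundary cases at $t=T$ and at jump times of $A^i$.
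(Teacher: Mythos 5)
The paper itself gives no proof of this lemma: it is simply recalled from Ma and Wang \cite{MW} (``We recall the following result appear in \cite{MW}''), so there is no internal argument to compare against. Your proposal supplies a complete proof, and it is essentially the right one. Parts (i)--(ii) are the natural pathwise analysis of the first crossing times of the c\`adl\`ag increasing paths: the dichotomy ``jump of $A^2-A^1$ at $\mu$ versus approximation from the right with $A^2_\mu-A^1_\mu=\varepsilon$'' is correct, and your use of monotonicity of $A^1$ (resp.\ $A^2$) to transfer the jump to $A^2$ (resp.\ $A^1$) is exactly what is needed; the derivation of $\P(\mu<\tau)=1$ from $A^2_\mu-A^1_\mu\ge\varepsilon$ and right-continuity is also correct. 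For (iii) you use the mechanism that makes the lemma true in this ``variant'' setting, namely the stochastic representation at stopping times (Theorem \ref{T1}, in the frozen form \eqref{eq11'}) together with the identity $A^i_s=\sup_{0\le v\le s^+}L^i_v$: subtracting the representation of $X^2_\mu$ from the equation for $Y^2_\mu$ leaves $\E\{\int_\mu^T[f^2(s,Y^2_s,\sup_{\mu\le v\le s}L^2_v)-f^2(s,Y^2_s,A^2_s)]ds\,|\,\mathcal F_\mu\}$, and the point-of-increase property of $A^2$ at $\mu$ makes $\sup_{\mu\le v\le s}L^2_v=A^2_s$ for Lebesgue-a.e.\ $s>\mu$, modulo the countably many discontinuities of $s\mapsto\sup L^2$ that you correctly discard; monotonicity of $f$ in $l$ then forces $Y^2_\mu=X^2_\mu$, and symmetrically $Y^1_\tau=X^1_\tau$. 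This is the same idea used elsewhere in the paper (the flat-off verification in Theorem \ref{T311} and the corollary on $Y_0=X_0$), and the backward $dB$-integral causes no trouble since it appears identically in both representations.

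Two small points of bookkeeping, which are features of the statement rather than gaps in your argument: the lemma must indeed be read on $\{\mu<T\}$ (on $\{\mu=T\}$ one has $\tau=\mu$ and (ii) fails literally), and this is how it is used in the proof of Theorem \ref{T321}, which works ``on the set $\{\mu<T\}$''; similarly the point-of-increase claim for $\tau$ is only meaningful on $\{\tau<T\}$, while on $\{\tau=T\}$ the identity $Y^1_\tau=X^1_\tau=\xi^1$ needed in (iii) is trivial, so nothing is lost. Your hedge at the endpoint $t=\tau$ in (ii) (reading $A^1_\tau$ as $A^1_{\tau^-}$ if $A^1$ jumps there) is harmless, since in the comparison proof the inequality $A^1\le A^2-\varepsilon/2$ is only used through the sign of $\Delta_l f^1_s$ under a $ds$-integral over $[\mu,\tau]$, where a single endpoint is negligible. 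With these readings your proof is correct and is, in substance, the argument of \cite{MW} transposed to the doubly stochastic setting.
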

Before give the comparison theorem, in order to simplify the notations, let us give the following. For $(Y^i,A^i),\; i=1,2$ be the solution to two VRBDSDEs with boundaries
$X^1$ and $X^2$ respectively, we denote $\Delta\Theta=\Theta^1-\Theta^2,\; \Theta=X,Y,A$, and $\xi$. Furthermore, recall
\begin{eqnarray*}
\mathcal{G}_t=\mathcal{F}^{W}_t\vee\mathcal{F}^{B}_T,
\end{eqnarray*}
we define two martingales
\begin{eqnarray*}
M^i_t=\E\left\{\int_{0}^{T}f^i\left(s,Y^i_s,A^i_s\right)ds+\int_{0}^{T}g\left(s,Y^i_s\right)dB_s|\mathcal{G}_t\right\},\; t\in[0,T],\, i=1,2.
\end{eqnarray*}
\begin{theorem}\label{T321}
Assume that the parameters of the VRBDSDEs \eqref{eq14} $(f^i,g,X^i),\, i=1,2$, satisfy $({\bf A1})$ and $({\bf A2})$. Assume further that
\begin{description}
\item [\rm(i)] $\; f^1(t,y,l)\geq f^2(t,y,l),\; d\P\otimes dt$ a.s.,
\item [\rm(ii)] $ \; X^1_t\leq X^2_t,\;\; 0\leq t\leq T$, a.s.,
\item [\rm(iii)] $\; \Delta X_s\leq \E\{{\rm e}^{(L+\frac{1}{2}L^2)(t-s)}\Delta X_t|\mathcal{G}_s\}$ a.s. for all $s$ and $t$ such that $s<t$.
\end{description}
Then, we have $A^1_t\geq A^2_t,\; t\in[0,T],\ \P$-a.s.
\end{theorem}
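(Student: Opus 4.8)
The plan is to argue by contradiction, adapting the scheme of \cite{MW} while carrying the new backward stochastic integral through the whole computation. Suppose the conclusion fails. Then there are $\varepsilon>0$ and $t_0\in[0,T]$ with $\P(A^2_{t_0}>A^1_{t_0}+\varepsilon)>0$, so the stopping time $\mu$ of \eqref{eq15} satisfies $\P(\mu<T)>0$; on $\{\mu<T\}$, Lemma \ref{L321} supplies the companion time $\tau$ with $\mu<\tau$ a.s., $A^1_s\le A^2_s-\varepsilon/2$ for all $s\in[\mu,\tau]$, and $Y^2_\mu=X^2_\mu$, $Y^1_\tau=X^1_\tau$. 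Everything reduces to producing two incompatible estimates for $\Delta Y_\mu$ on this event. The first is immediate: by the barrier constraint $Y^1_\mu\le X^1_\mu$, by $Y^2_\mu=X^2_\mu$ (Lemma \ref{L321}) and by hypothesis (ii),
\[
\Delta Y_\mu=Y^1_\mu-X^2_\mu\le X^1_\mu-X^2_\mu=\Delta X_\mu\le 0\qquad\text{on }\{\mu<T\}.
\]

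For the reverse inequality I would restrict the two equations to the stochastic interval $[\mu,\tau]$, obtaining
\[
\Delta Y_\mu=\Delta Y_\tau+\int_\mu^\tau\big[f^1(s,Y^1_s,A^1_s)-f^2(s,Y^2_s,A^2_s)\big]\,ds+\int_\mu^\tau\big[g(s,Y^1_s)-g(s,Y^2_s)\big]\,dB_s-\int_\mu^\tau\Delta Z_s\,dW_s ,
\]
and then linearise. Write $f^1(s,Y^1_s,A^1_s)-f^2(s,Y^2_s,A^2_s)=\rho_s+\alpha_s\Delta Y_s$ and $g(s,Y^1_s)-g(s,Y^2_s)=\beta_s\Delta Y_s$, where $|\alpha_s|\vee|\beta_s|\le L$ by $({\bf A2})$-(iii), and where, using the monotonicity of $f^1$ in its last argument together with the lower Lipschitz bound in $({\bf A2})$-(iv), then hypothesis (i), then $A^1_s\le A^2_s-\varepsilon/2$ from Lemma \ref{L321},
\[
\rho_s:=f^1(s,Y^1_s,A^1_s)-f^2(s,Y^1_s,A^2_s)\ \ge\ k\,(A^2_s-A^1_s)\ \ge\ \tfrac{k\varepsilon}{2}\ >\ 0 ,\qquad s\in[\mu,\tau].
\]
Thus $\Delta Y$ solves on $[\mu,\tau]$ a linear backward doubly stochastic equation with a strictly positive source $\rho$. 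I would solve it by the adjoint-process device: let $\Gamma$ be the positive solution of $d\Gamma_s=\Gamma_s(\alpha_s-\beta_s^2)\,ds+\Gamma_s\beta_s\,dB_s$ with $\Gamma_\mu=1$, and apply the It\^{o} product formula for backward doubly stochastic integrals to $\Gamma_s\Delta Y_s$; the $dB$-contributions cancel pathwise, leaving $d(\Gamma_s\Delta Y_s)=-\Gamma_s\rho_s\,ds+\Gamma_s\Delta Z_s\,dW_s$, so that, after conditioning on $\mathcal{F}_\mu$ (the remaining $W$-integral being a genuine martingale for the increasing filtration $\{\mathcal{F}^W_s\vee\mathcal{F}^B_{\mu,T}\}_{s\ge\mu}$),
\[
\Delta Y_\mu=\E\Big\{\Gamma_\tau\Delta Y_\tau+\int_\mu^\tau\Gamma_s\rho_s\,ds\ \Big|\ \mathcal{F}_\mu\Big\}.
\]

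To conclude, note that $\Delta Y_\tau=X^1_\tau-Y^2_\tau\ge X^1_\tau-X^2_\tau=\Delta X_\tau$ (barrier constraint for $Y^2$), while $\Gamma>0$ and $\rho_s\ge k\varepsilon/2$, so the term $\E\{\int_\mu^\tau\Gamma_s\rho_s\,ds\mid\mathcal{F}_\mu\}$ is a.s. strictly positive on $\{\mu<T\}$ (where $\mu<\tau$). It then suffices to check $\E\{\Gamma_\tau\Delta X_\tau\mid\mathcal{F}_\mu\}\ge\Delta X_\mu$, which is exactly where hypothesis (iii), with the $\tfrac12 L^2$ in its exponent, is used: solving the adjoint equation explicitly gives $\Gamma_\tau=\exp\!\big(\int_\mu^\tau\alpha_u\,du\big)\mathcal{E}_\tau$ with $\mathcal{E}_\tau=\exp\!\big(\int_\mu^\tau\beta_u\,dB_u-\tfrac12\int_\mu^\tau\beta_u^2\,du\big)$ a mean-one backward exponential martingale (seen, e.g., by reversing time to turn the backward integral into a forward one), and the quadratic-variation correction of the backward integral supplies precisely the extra exponential budget $\tfrac12\int_\mu^\tau\beta_u^2\,du\le\tfrac12 L^2(\tau-\mu)$ which, with $\int_\mu^\tau\alpha_u\,du\le L(\tau-\mu)$ and $\Delta X_\tau\le 0$, brings the required bound down to hypothesis (iii) (first conditioning on all of $W$ so that $\mathcal{E}$ is a martingale, then passing the $\mathcal{G}_\mu$-estimate of (iii) down to $\mathcal{F}_\mu$ since $\mathcal{F}_\mu\subset\mathcal{G}_\mu$). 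Hence $\Delta Y_\mu>\Delta X_\mu$ on $\{\mu<T\}$, contradicting the first estimate, so $\P(\mu<T)=0$ and $A^1\ge A^2$.

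The main obstacle is precisely this treatment of the backward It\^{o} term $\int g\,dB$, which has no analogue in \cite{MW}: one must push the linearisation through the backward doubly stochastic calculus, where the quadratic variation of a backward integral enters with the opposite sign to that of a forward one, and one must then make rigorous the reduction of the random adjoint weight $\Gamma_\tau$ to the deterministic factor $e^{(L+\frac12 L^2)(\tau-\mu)}$ of hypothesis (iii). The delicate points are the bookkeeping between the hybrid family $\{\mathcal{F}_t\}$ under which the equations live and the auxiliary filtration $\{\mathcal{G}_t\}$ under which (iii) is stated, and the fact that $\Delta X_\tau\le 0$ forces the domination by $\Gamma_\tau$ to run in the correct direction.
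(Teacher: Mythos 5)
Your proposal is correct and follows essentially the same route as the paper: argue by contradiction on $\{\mu<T\}$ using Lemma \ref{L321}, linearise the difference of the two equations with bounded coefficients coming from $({\bf A2})$-(iii) and a strictly positive source $\rho_s\geq k\varepsilon/2$ coming from $({\bf A2})$-(iv) and $A^1\leq A^2-\varepsilon/2$, multiply by the exponential adjoint process $\Gamma$ (the paper's $\Gamma_t=\exp(\int_0^t\nabla_y f^1\,ds+\int_0^t\nabla_y g\,dB-\tfrac12\int_0^t|\nabla_y g|^2\,ds)$, following \cite{AON}), and then play the resulting strict inequality against the bound obtained from the barrier relations $\Delta Y_\mu\leq\Delta X_\mu$, $\Delta Y_\tau\geq\Delta X_\tau$ and hypothesis (iii) with the factor ${\rm e}^{(L+\frac12 L^2)(\tau-\mu)}$. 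Your organisation of the contradiction as two incompatible estimates of $\Delta Y_\mu$, and your more explicit discussion of the backward-integral correction and of the $\mathcal{G}_\mu$-versus-$\mathcal{F}_\mu$ conditioning in the last step, are only presentational variants of the paper's argument (which treats that final estimate at least as briefly).
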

\begin{remark}
As it is explained in \cite{MW}, the assumption $(iii)$ in above theorem significate that the process ${\rm e}^{Ls}\Delta X_s$, is a submartingale and it does not add restrictive
on the regularity of the boundary processes $X^1$ and $X^2$, which are only required to be the optional processes satisfying $({\bf A3})$.
\end{remark}

\begin{proof}[Proof of Theorem\, \ref{T321}]
According to \eqref{eq14} and the previous notations, we can write, on the set $\{\mu<T\}$
\begin{eqnarray}
\Delta Y_{\mu}&=&\E\left\{\Delta Y_{\tau}+\int_{\mu}^{\tau}\left[f^1\left(s,Y^1_s,A^1_s\right)-f^2\left(s,Y^2_s,A^2_s\right)\right]ds\right.\nonumber\\
&&+\left.\int_{\mu}^{\tau}\left[g\left(s,Y^1_s\right)-g\left(s,Y^2_s\right)\right]dB_s+(\Delta M_{\tau}-\Delta M_{\mu})|\mathcal{F}_{\mu}\right\},\label{eq16}
\end{eqnarray}
where $\Delta M=M^1-M^2$, and
\begin{eqnarray*}
\nabla_{y}f^1_{s}&=&\frac{f^1\left(s,Y^1_s,A^1_s\right)-f^1\left(s,Y^2_s,A^1_s\right)}{Y^1_s-Y^2_s}{\bf 1}_{\{Y^1_s\neq Y^2_s\}},\\
\nabla_{y}g_{s}&=&\frac{g\left(s,Y^1_s\right)-g\left(s,Y^2_s\right)}{Y^1_s-Y^2_s}{\bf 1}_{\{Y^1_s\neq Y^2_s\}},\\
\Delta_{l}f^1_s&=& f^1\left(s,Y^2_s,A^1_s\right)-f^2\left(s,Y^2_s,A^2_s\right),\\
\Delta_{2}f_s&=& f^1\left(s,Y^2_s,A^2_s\right)-f^2\left(s,Y^2_s,A^2_s\right).
\end{eqnarray*}
It is clear that $({\bf A2})$ implies that $\nabla_{y}f^1$ and $\nabla_{y}g$ are bounded progressively measurable processes, and by the definition of
$\mu,\, \tau$ and the monotonicity of $f$ on it variable $l$, we have $\Delta_{l}f^1>0$ on the interval $[\mu,\tau]$.
Hence, $\Delta Y$ is a unique
solution of the following linear BDSDE
\begin{eqnarray*}
\Delta Y_{\mu}&=&\E\left\{\Delta Y_{\tau}+\int_{\mu}^{\tau}\nabla_{y}f^1_{s}\Delta Y_sds+\int_{\mu}^{\tau}[\Delta_{l}f^1_s+\Delta_{2}f_s]ds\right.\nonumber\\
&&+\left.\int_{\mu}^{\tau}\nabla_{y}g_{s}\Delta Y_sdB_s+(\Delta M_{\tau}-\Delta M_{\mu})|\mathcal{F}_{\mu}\right\}\label{eq16}
\end{eqnarray*}
Setting $\Gamma_t=\exp\left(\int_{0}^{t}\nabla_{y}f^1_{s}ds+\int_{0}^{t}\nabla_{y}g_{s}dB_s-\frac{1}{2}\int_{0}^{t}|\nabla_{y}g_{s}|^2ds\right)$,
as it done in \cite{AON}, one can derive
\begin{eqnarray*}
\E\left\{\Gamma_{\mu}\Delta Y_{\mu}-\Gamma_{\tau}\Delta Y_{\tau}|\mathcal{F}_{\mu}\right\}=\E\left\{\int_{\mu}^{\tau}\Gamma_s[\Delta_{l}f^1_s+\Delta_{2}f_s]ds
-\int_{\mu}^{\tau}\Gamma_{s}d(\Delta M_s)|\mathcal{F}_{\mu}\right\}.
\end{eqnarray*}
Therefore, since $f^1\geq f^2,\; \Delta_{2}f\geq 0, \, d\P\otimes dt$-a.s., and consequently, since $M^i,\; i=1,2$, is a martingale, we get
\begin{eqnarray}
\E\left\{\Gamma_{\mu}\Delta Y_{\mu}-\Gamma_{\tau}\Delta Y_{\tau}|\mathcal{F}_{\mu}\right\}=\E\left\{\int_{\mu}^{\tau}\Gamma_s[\Delta_{l}f^1_s
+\Delta_{2}f_s]ds|\mathcal{F}_{\mu}\right\}>0.\label{eq17}
\end{eqnarray}
On the other hand, by the flat-off condition and Lemma \ref{L321}-$(iii)$, one can check that $Y^1_{\mu}-Y^2_{\mu}\leq X^1_{\mu}-X^2_{\mu}$ and
$Y^1_{\tau}-Y^2_{\tau}\leq X^1_{\tau}-X^2_{\tau}$,
\begin{eqnarray}
\E\left\{\Gamma_{\mu}\Delta Y_{\mu}-\Gamma_{\tau}\Delta Y_{\tau}|\mathcal{F}_{\mu}\right\}
\leq \E\left\{\Gamma_{\mu}\Delta X_{\mu}-\Gamma_{\tau}\Delta X_{\tau}|\mathcal{F}_{\mu}\right\}.\label{eq18}
\end{eqnarray}
It is now clear that if the right hand side of \eqref{eq18} is non-positive, then \eqref{eq18} contradicts to \eqref{eq17}, and therefore one must have $\P(\mu<T)=0$. In other
words, $A^2_t\leq A^1_t+\varepsilon$, for all $t\in[0,T],\, \P$-a.s. Since $\varepsilon$ is taken arbitrary, entails that
\begin{eqnarray*}
 A^2_t\leq A^1_t,\;\;\;\; t\in [0,T],\;\; \P\mbox{-a.s.}
\end{eqnarray*}
Now it remain to show that the right hand side of \eqref{eq18} is non-positive. To do this, let us note that since by assumption $(ii)$ we have $\Delta X_{\tau}\leq 0$,
it follows from \eqref{eq18} and assmption $(iii)$ that
\begin{eqnarray*}
\E\left\{\Gamma_{\mu}\Delta Y_{\mu}-\Gamma_{\tau}\Delta Y_{\tau}|\mathcal{F}_{\mu}\right\}
&\leq& \Gamma_{\mu}\E\left\{\Delta X_{\mu}-{\rm e}^{\int_{\mu}^{\tau}\nabla_{y}f^1_{s}ds
+\int_{\mu}^{\tau}\nabla_{y}g_{s}dB_s-\frac{1}{2}\int_{\mu}^{\tau}|\nabla_{y}g_{s}|^2ds}\Delta X_{\tau}|\mathcal{F}_{\mu}\right\}\\
&\leq & \Gamma_{\mu}\E\left\{\Delta X_{\mu}-{\rm e}^{(L+\frac{1}{2}L^2)(\tau-\mu)}\Delta X_{\tau}|\mathcal{F}_{\mu}\right\}\\
&\leq & 0.
\end{eqnarray*}
\end{proof}
As it is emphasized in \cite{MW}, Theorem \ref{T321} only gives the comparison between the two reflecting processes $A^1$ and $A^2$. This is still one step away from
comparison between $Y^1$ and $Y^2$, which is much desirable for obvious reason. But, the latter is not true in general, due do the "opposite" monotonicity on $f^{i}$'s
on the variable $l$. We nevertheless have the following corollary of Theorem \ref{T321}.
\begin{corollary}
Assume all the assumptions of Theorem \ref{T321} hold and further $f^1=f^2$. Then $Y^1_t\leq Y^2_t$, for all $t\in[0,T],\, \P$-a.s.
\end{corollary}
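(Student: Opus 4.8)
The plan is to exploit Theorem~\ref{T321}, which already gives $A^1_t \ge A^2_t$ for all $t$, together with the fact that when $f^1 = f^2 =: f$ the two processes $Y^1, Y^2$ solve genuine variant BDSDEs with the \emph{same} generator but different reflecting densities. First I would fix $t\in[0,T]$ and write, using the defining equations \eqref{eq14},
\begin{eqnarray*}
\Delta Y_t = \E\left\{\Delta \xi + \int_t^T \left[f\left(s,Y^1_s,A^1_s\right) - f\left(s,Y^2_s,A^2_s\right)\right]ds + \int_t^T \left[g\left(s,Y^1_s\right)-g\left(s,Y^2_s\right)\right]dB_s \, \big| \, \mathcal{F}_t\right\}.
\end{eqnarray*}
Since $\xi^i = X^i_T$ and assumption $(ii)$ of Theorem~\ref{T321} gives $X^1_T \le X^2_T$, we have $\Delta\xi \le 0$. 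Next I would decompose the generator difference by linearizing in $y$ exactly as in the proof of Theorem~\ref{T321}: introduce $\nabla_y f_s$ and $\nabla_y g_s$ (the bounded progressively measurable difference quotients, well defined by $({\bf A2})$-$(iii)$), and split off the term $f(s,Y^2_s,A^1_s) - f(s,Y^2_s,A^2_s)$. Because $f(s,\cdot)$ is \emph{decreasing} in its third argument and $A^1_s \ge A^2_s$ by Theorem~\ref{T321}, this residual term is $\le 0$ for a.e.\ $s$.

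Thus $\Delta Y$ solves a linear BDSDE of the form
\begin{eqnarray*}
\Delta Y_t = \E\left\{\Delta\xi + \int_t^T \nabla_y f_s\,\Delta Y_s\,ds + \int_t^T R_s\,ds + \int_t^T \nabla_y g_s\,\Delta Y_s\,dB_s \, \big| \, \mathcal{F}_t\right\},
\end{eqnarray*}
where $R_s = f(s,Y^2_s,A^1_s) - f(s,Y^2_s,A^2_s) \le 0$. Introducing the exponential weight $\Gamma_t = \exp\left(\int_0^t \nabla_y f_s\,ds + \int_0^t \nabla_y g_s\,dB_s - \frac{1}{2}\int_0^t |\nabla_y g_s|^2\,ds\right)$ as in the cited linearization argument from \cite{AON}, one obtains $\E\{\Gamma_T\,\Delta\xi + \int_t^T \Gamma_s R_s\,ds \mid \mathcal{F}_t\} = \Gamma_t\,\Delta Y_t$ after extending over $[t,T]$ and using that $M^1 - M^2$ is a martingale. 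Since $\Gamma_s > 0$, $\Delta\xi \le 0$ and $R_s \le 0$, the left side is $\le 0$, hence $\Delta Y_t \le 0$, i.e.\ $Y^1_t \le Y^2_t$, for every $t$, $\P$-a.s.

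The main obstacle is handling the stochastic integral $\int_t^T \nabla_y g_s\,\Delta Y_s\,dB_s$ correctly when passing to the $\Gamma$-weighted formulation: one must justify that $\E\{\Gamma_s\,d(\Delta M_s) \mid \mathcal{F}_t\}$ contributes nothing and that the It\^o/It\^o--Skorohod product rule applies to $\Gamma_s\,\Delta Y_s$ in this doubly stochastic setting with the non-monotone family $\{\mathcal{F}_t\}$. This is precisely the computation referenced in \cite{AON} and already invoked in the proof of Theorem~\ref{T321}, so it can be carried over verbatim; the only genuinely new input here is the sign of $R_s$, which is immediate from Theorem~\ref{T321} and the monotonicity of $f$ in $l$.
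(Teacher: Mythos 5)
Your argument is correct, but it takes a different route from the paper. The paper's proof is a short reduction: setting $f=f^1=f^2$, it freezes the reflecting processes into the drivers, defining $\widetilde{f}^i(t,\omega,y)=f(t,\omega,y,A^i_t(\omega))$, so that $Y^1,Y^2$ become solutions of ordinary BDSDEs with the same $g$; since Theorem \ref{T321} gives $A^1\geq A^2$ and $f$ is decreasing in $l$, one has $\widetilde{f}^1\leq\widetilde{f}^2$, and together with $\xi^1=X^1_T\leq X^2_T=\xi^2$ the conclusion follows at once from the (already known) comparison theorem for BDSDEs. You instead reprove that comparison from scratch: you linearize the generator difference via $\nabla_y f$, $\nabla_y g$, isolate the residual $R_s=f(s,Y^2_s,A^1_s)-f(s,Y^2_s,A^2_s)\leq 0$ (same use of Theorem \ref{T321} and monotonicity in $l$), and run the exponential-weight argument with $\Gamma_t$ as in the proof of Theorem \ref{T321} and \cite{AON}, concluding from $\Gamma>0$, $\Delta\xi\leq 0$, $R\leq 0$ that $\Delta Y_t\leq 0$. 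Both proofs hinge on exactly the same two inputs ($A^1\geq A^2$ and the sign of the frozen-driver difference plus terminal values), so the mathematical content is equivalent; what your version buys is self-containedness, at the cost of shouldering the It\^o--Skorohod product-rule computation for $\Gamma_s\Delta Y_s$ in the doubly stochastic, non-filtration setting, which the paper avoids by simply invoking the existing BDSDE comparison theorem. You correctly flag that this technical step is the same one already used (and delegated to \cite{AON}) in the proof of Theorem \ref{T321}, so no genuine gap remains.
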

\begin{proof}
Let us denote $f=f^1=f^2$ and define two random functions $\widetilde{f}^i(t,\omega,y)=f(t,\omega,y,A^i_t(\omega))$, for $(t,\omega,y)\in[0,T]\times\Omega\times\R, i=1,2$.
Then $Y^1$ and $Y^2$ can be seen as the solution of BDSDEs
\begin{eqnarray*}
Y^i_{t}=\E\left\{\xi^i+\int_{t}^{T}\widetilde{f}^i\left(s,Y^i_s\right)ds+\int_{t}^{T}g\left(s,Y^i_s\right)dB_s|\mathcal{F}_t\right\},\; t\in[0,T],\;\; i=1,2.
\end{eqnarray*}
It follows from the fact $A^1\geq A^2$ that $\widetilde{f}^1(t,\omega,y)=f(t,\omega,y,A^1_t(\omega))\leq f(t,\omega,y,A^2_t(\omega))=\widetilde{f}^2(t,\omega,y)$.
 Therefore, since $\xi^1=X^1_T\leq X^2_T=\xi^2$, and according to the comparison theorem of BDSDEs, we have $Y^1_t\leq Y^2_t$, for all $t\in[0,T],\ \P$-a.s.
\end{proof}
\subsection{Stability results}
In this section, we study another useful aspect of the well-posedness of the VRBDSDE, which it is called the continuous dependence of the solution on the boundary process
whence the terminal process as well.
For this instance, let us introduce, for any optional process $X$ and any stopping time $\mu$ and $\tau$ satisfy that $\mu<\tau$,
\begin{eqnarray*}
m_{\mu,\tau}(X)=\frac{\E\{X_{\tau}-X_{\mu}|\mathcal{F}_{\mu}\}}{\E\{\tau-\mu|\mathcal{F}_{\mu}\}}.
\end{eqnarray*}
Let us note that the random variable $m_{\mu,\tau}(X)$ measures the path regularity of the "nonmartingale" part of the boundary process $X$.
In the sequel, we will show that this will be a major measurement for the "closeness" of the boundary processes, as far as the continuous dependence is concerned.

Let us consider $\{X^n\}_{n=1}^{\infty}$, a sequence of optional processes satisfying that $({\bf A3})$. We suppose that $\{X^n\}_{n=1}^{\infty}$ converges
to $X^0$ in $\mathcal{S}^{\infty}$, and that $X^0$ satisfies $({\bf A3})$ as well. Let $(Y^n,A^n)$ be the unique solution to the VRBDSDE's with parameters $(f,g,X^n)$, for
$n=0,1,2,\cdot\cdot\cdot\cdot\cdot$. Roughly speaking, for $n=0,1,2,\cdot\cdot\cdot\cdot$, we have
\begin{eqnarray*}
&&X^n_{t}=\E\left\{\xi^n+\int_{t}^{T}f\left(s,Y^n_s,\sup_{t\leq v\leq s}L^n_v\right)ds+\int_{t}^{T}g\left(s,Y^n_s\right)dB_s|\mathcal{F}_t\right\},\\
&&A^n_s=\sup_{0\leq v\leq s^+}L^n_v,\\
&&Y^n_{t}=\E\left\{\xi^n+\int_{t}^{T}f\left(s,Y^n_s,A^n_s\right)ds+\int_{t}^{T}g\left(s,Y^n_s\right)dB_s|\mathcal{F}_t\right\}.
\end{eqnarray*}

Next, let us give the following lemma that provides the control of $|A^n_t-A_t^0|$, which is needed in the sequel.
\begin{lemma}\label{L331}
Assume $({\bf A2})$ and $({\bf A3})$ hold. Then, for all $t\in[0,T]$, it holds that
\begin{eqnarray*}
|A^n_t-A_t^0|\leq \frac{\sqrt{3}}{k}\sup_{0\leq s\leq t}\mbox{ess}\sup_{\tau>s}\left|m^n_{\mu,\tau}-m^0_{\mu,\tau}\right|+\frac{\sqrt{3}L}{k}(1+\sqrt{T})\|Y^n-Y^0\|_{\infty}.
\end{eqnarray*}
\end{lemma}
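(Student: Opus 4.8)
The plan is to imitate the proof of Lemma~\ref{L312} almost line for line; the only genuinely new ingredient is that the two boundary processes $X^{n}$ and $X^{0}$ now differ, so that the path–regularity quantities $m_{s,\tau}(X^{n})$ and $m_{s,\tau}(X^{0})$ must be carried along next to the term $\|Y^{n}-Y^{0}\|_{\infty}$ that already occurred there. Write $m^{j}_{s,\tau}=m_{s,\tau}(X^{j})$, $j=n,0$.

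First I would fix $s\in[0,T]$ and a stopping time $\tau>s$ and apply Theorem~\ref{T1} to each of the two VRBDSDEs, obtaining the $\mathcal{F}_{s}$-measurable random variables $l^{j}_{s,\tau}$ ($j=n,0$) characterised by
$$\E\{X^{j}_{s}-X^{j}_{\tau}\,|\,\mathcal{F}_{s}\}=\E\Big\{\int_{s}^{\tau}f\big(u,Y^{j}_{u},l^{j}_{s,\tau}\big)\,du+\int_{s}^{\tau}g\big(u,Y^{j}_{u}\big)\,dB_{u}\,\Big|\,\mathcal{F}_{s}\Big\}.$$
Next, on $D^{\tau}_{s}=\{l^{n}_{s,\tau}>l^{0}_{s,\tau}\}\in\mathcal{F}_{s}$ I would subtract the two identities, multiply by ${\bf 1}_{D^{\tau}_{s}}$, and split
$$f(u,Y^{n}_{u},l^{n}_{s,\tau})-f(u,Y^{0}_{u},l^{0}_{s,\tau})=\big[f(u,Y^{n}_{u},l^{n}_{s,\tau})-f(u,Y^{n}_{u},l^{0}_{s,\tau})\big]+\big[f(u,Y^{n}_{u},l^{0}_{s,\tau})-f(u,Y^{0}_{u},l^{0}_{s,\tau})\big],$$
so as to isolate the first bracket on one side of the resulting identity. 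On $D^{\tau}_{s}$ this bracket keeps a constant sign, so by the lower bound in $({\bf A2})$-$(iv)$ and the $\mathcal{F}_{s}$-measurability of $l^{n}_{s,\tau},l^{0}_{s,\tau}$ the absolute value of that side is at least $k\,|l^{n}_{s,\tau}-l^{0}_{s,\tau}|\,\E\{\tau-s\,|\,\mathcal{F}_{s}\}{\bf 1}_{D^{\tau}_{s}}$, while the other side is dominated by the sum of three terms: $|m^{n}_{s,\tau}-m^{0}_{s,\tau}|\,\E\{\tau-s\,|\,\mathcal{F}_{s}\}{\bf 1}_{D^{\tau}_{s}}$, from $\E\{(X^{n}_{s}-X^{n}_{\tau})-(X^{0}_{s}-X^{0}_{\tau})\,|\,\mathcal{F}_{s}\}$ and the definition of $m$; $L\,\|Y^{n}-Y^{0}\|_{\infty}\,\E\{\tau-s\,|\,\mathcal{F}_{s}\}{\bf 1}_{D^{\tau}_{s}}$, from the $y$-Lipschitz bound on $f$ in $({\bf A2})$-$(iii)$; and $L\,\|Y^{n}-Y^{0}\|_{\infty}\,(\E\{\tau-s\,|\,\mathcal{F}_{s}\})^{1/2}{\bf 1}_{D^{\tau}_{s}}$, from the $y$-Lipschitz bound on $g$ together with the conditional It\^{o} isometry, just as in Lemma~\ref{L312}. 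Squaring, using $(a+b+c)^{2}\le 3(a^{2}+b^{2}+c^{2})$, dividing by $(\E\{\tau-s\,|\,\mathcal{F}_{s}\})^{2}$ and taking square roots gives, on $D^{\tau}_{s}$,
$$|l^{n}_{s,\tau}-l^{0}_{s,\tau}|\le\frac{\sqrt{3}}{k}\,|m^{n}_{s,\tau}-m^{0}_{s,\tau}|+\frac{\sqrt{3}\,L}{k}\Big(1+(\E\{\tau-s\,|\,\mathcal{F}_{s}\})^{-1/2}\Big)\|Y^{n}-Y^{0}\|_{\infty},$$
and the symmetric argument yields the same inequality on the complement of $D^{\tau}_{s}$, hence $\P$-a.s.

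Finally I would transfer this from $l$ to $A$ exactly as at the end of the proof of Lemma~\ref{L312}: since $L^{j}_{s}=\mbox{ess}\inf_{\tau>s}l^{j}_{s,\tau}$ and $A^{j}_{t}=\sup_{0\le s\le t}L^{j}_{s}$, the same manipulation with $\sup$ and $\mbox{ess}\inf$ used there gives
$$|A^{n}_{t}-A^{0}_{t}|\le\sup_{0\le s\le t}\mbox{ess}\sup_{\tau>s}|l^{n}_{s,\tau}-l^{0}_{s,\tau}|,$$
and substituting the previous bound and estimating $(\E\{\tau-s\,|\,\mathcal{F}_{s}\})^{-1/2}\le\sqrt{T}$ as in Lemma~\ref{L312} produces the asserted inequality. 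The one point that needs care is the bookkeeping in the middle step: arranging the subtracted identity so that the isolated $f$-difference genuinely has a constant sign on $D^{\tau}_{s}$ (so $({\bf A2})$-$(iv)$ may be applied to bound it below by $k$), and pulling the $\mathcal{F}_{s}$-measurable factors $l^{n}_{s,\tau},l^{0}_{s,\tau},{\bf 1}_{D^{\tau}_{s}}$ out of the conditional expectations at the right moments; apart from that it is a routine replay of Lemma~\ref{L312} with one extra ($m$-)term.
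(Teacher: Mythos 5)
Your proposal follows essentially the same route as the paper's own proof: introduce $l^{n}_{s,\tau},l^{0}_{s,\tau}$ via the representation theorem, subtract the two identities on $D^{\tau}_{s}=\{l^{n}_{s,\tau}>l^{0}_{s,\tau}\}$, use the lower bound $k$ from $({\bf A2})$-$(iv)$ on the isolated $l$-difference and the Lipschitz bounds plus the conditional It\^{o} isometry on the remaining terms, apply $(a+b+c)^{2}\le 3(a^{2}+b^{2}+c^{2})$, and pass from $l$ to $A$ through the $\mbox{ess}\inf$/$\sup$ manipulation of Lemma~\ref{L312}. The only difference (freezing $Y^{n}$ rather than $Y^{0}$ in the splitting of the $f$-increment) is immaterial, so the argument matches the paper's.
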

\begin{proof}
The proof follows the similar step as the proof of Lemma \ref{L312}. Let us consider, $l^n_{s,\tau},\; n=0,1,\cdot\cdot\cdot$ the $\mathcal{F}_s$-measurable
random variable such that
\begin{eqnarray}
\E(X^n_s-X^n_{\tau}|\mathcal{F}_{s})&=&\E\left\{\int_{s}^{\tau}f\left(u,Y^n_u,l^n_{s,\tau}\right)du+\int_{s}^{\tau}g\left(u,Y^n_u\right)dB_u|\mathcal{F}_s\right\}.
\end{eqnarray}
Therefore, for $n=1,\cdot\cdot\cdot$, we have
\begin{eqnarray*}
\E(X^n_s-X^n_{\tau}|\mathcal{F}_{s})-\E(X^0_s-X^0_{\tau}|\mathcal{F}_{s})
&=&\E\left\{\int_{s}^{\tau}[f\left(u,Y^n_u,l^n_{s,\tau}\right)-f\left(u,Y^0_u,l^0_{s,\tau}\right)]du\right.\\
&&\left.+\int_{s}^{\tau}[g\left(u,Y^n_u\right)-g\left(u,Y^0_u\right)]dB_u|\mathcal{F}_s\right\}.
\end{eqnarray*}
On the set $D^{\tau}_{s}=\left\{\omega/ l^n_{s,\tau}(\omega)>l^0_{s,\tau}(\omega)\right\}\in \mathcal{F}_s$, we get
\begin{eqnarray*}
&&{\bf 1}_{D^{\tau}_{s}}\E(X^n_s-X^n_{\tau}|\mathcal{F}_{s})-\E(X^0_s-X^0_{\tau}|\mathcal{F}_{s})\\
&=&{\bf 1}_{D^{\tau}_{s}}\E\left\{\int_{s}^{\tau}[f\left(u,Y^n_u,l^n_{s,\tau}\right)-f\left(u,Y^0_u,l^n_{s,\tau}\right)+f\left(u,Y^0_u,l^n_{s,\tau}\right)
-f\left(u,Y^0_u,l^0_{s,\tau}\right)]du\right.\\
&&\left.+\int_{s}^{\tau}[g\left(u,Y^n_u\right)-g\left(u,Y^0_u\right)]dB_u|\mathcal{F}_s\right\}.
\end{eqnarray*}
From $({\bf A2})$, it clear that on $D^{\tau}_{s},\, f\left(u,Y^0_u,l^n_{s,\tau}\right)
-f\left(u,Y^0_u,l^0_{s,\tau}\right)\geq k|l^n_{s,\tau}-l^0_{s,\tau}|$ and hence
\begin{eqnarray*}
k^2[|l^n_{s,\tau}-l^0_{s,\tau}|\E\left\{\tau-s|\mathcal{F}_s\right\}]^2{\bf 1}_{D^{\tau}_{s}}
&\leq &3|\E(X^n_s-X^n_{\tau}|\mathcal{F}_{s})-\E(X^0_s-X^0_{\tau}|\mathcal{F}_{s})|^2{\bf 1}_{D^{\tau}_{s}}\\
&&+
3\left|\E\left\{\int_{s}^{\tau}L|Y^n_u-Y^0_u|du\mathcal{F}_s\right\}\right|^2{\bf 1}_{D^{\tau}_{s}}\\
&&\left.+3\left|\int_{s}^{\tau}[g\left(u,Y^n_u\right)-g\left(u,Y^0_u\right)]dB_u|\mathcal{F}_s\right\}\right|^2{\bf 1}_{D^{\tau}_{s}}.
\end{eqnarray*}
Next, assumption $({\bf A2})$-$(iii)$ together with conditional expectation version of isometry \newline property lead to
\begin{eqnarray*}
|l_{s,\tau}-l'_{s,\tau}|\leq \frac{\sqrt{3}}{k}\left|m^n_{\mu,\tau}-m^0_{\mu,\tau}\right|+\frac{\sqrt{3}\,L}{k}
\left(1+[\E\left\{(\tau-s)|\mathcal{F}_s\right\}]^{-1/2}\right)\|y-y'\|_{\infty}
\end{eqnarray*}
on $D^{\tau}_{s}$. Similarly, we can show that the inequality holds on the complement of $D^{\tau}_{s}$ as well. Therefore, we have
\begin{eqnarray*}
|l_{s,\tau}-l'_{s,\tau}|\leq \frac{\sqrt{3}}{k}\left|m^n_{\mu,\tau}-m^0_{\mu,\tau}\right|+
\frac{\sqrt{3}\,L}{k}\left(1+[\E\left\{(\tau-s)|\mathcal{F}_s\right\}]^{-1/2}\right)\|y-y'\|_{\infty}
\end{eqnarray*}
Finally, according to the definition of $A^n,\, n=0,1,\cdot\cdot\cdot$, we conclude that for $n=1,2,\cdot\cdot\cdot$,
\begin{eqnarray*}
|A_t-A'_t|&=&\left|\sup_{0\leq s\leq t}L^n_{s}-\sup_{0\leq s\leq t}L^0_{s}\right|\\
&\leq& \sup_{0\leq s\leq t}\left|\mbox{ess}\inf_{\tau>s}l^n_{s,\tau}-\mbox{ess}\inf_{\tau>s}l^0_{s,\tau}\right|\\
&\leq&\sup_{0\leq s\leq t}\mbox{ess}\sup_{\tau>s}|l^n_{s,\tau}-l^0_{s,\tau}|\\
&\leq &\sup_{0\leq s\leq t}\mbox{ess}\sup_{\tau>s}\frac{\sqrt{3}}{k}\left[\left|m^n_{\mu,\tau}-m^0_{\mu,\tau}\right|
+L\left(1+\left(\E\left\{(\tau-s)|\mathcal{F}_s\right\}\right)^{-1/2}\right)\right]\|Y^n-Y^0\|_{\infty}\\
&\leq & \frac{\sqrt{3}}{k}\sup_{0\leq s\leq t}\mbox{ess}\sup_{\tau>s}\left|m^n_{\mu,\tau}-m^0_{\mu,\tau}\right|+\frac{\sqrt{3}L}{k}(1+\sqrt{T})\|Y^n-Y^0\|_{\infty}.
\end{eqnarray*}
\end{proof}
Now, we are ready to derive the main result of this subsection.
\begin{theorem}\label{T331}
Assume $({\bf A2})$ and $({\bf A3})$ hold. Further, assume that $${\sqrt{6}TL\left(1+\sqrt{3}\frac{K}{k}\left(1+\sqrt{T}\right)\right)+L\sqrt{3T}< 1}.$$ Then, it holds that
\begin{eqnarray*}
\|Y^n-Y^0\|_{\infty}&\leq & \frac{\sqrt{3}}{1-\left[\sqrt{6}TL\left(1+\sqrt{3}\frac{K}{k}\left(1+\sqrt{T}\right)\right)+L\sqrt{3T}\right]}\times \\ &&\times\left\{ \left\|\xi^n-\xi^0\right\|_{\infty}
+\frac{\sqrt{6}TK}{k}\left\|\sup_{\mu\in[0,T]}ess\sup_{\tau>\mu}\frac{1}{k}\left|m^n_{\mu,\tau}-m^0_{\mu,\tau}\right|\right\|_{\infty}\right\}.
\end{eqnarray*}
\end{theorem}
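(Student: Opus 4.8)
The plan is to reproduce the contraction estimate behind Theorem~\ref{T311}, now also carrying the dependence of the fixed point on the terminal value $\xi^n$ and on the path--regularity functionals $m^n_{\mu,\tau}$ of the boundary $X^n$. First I would note that the standing hypothesis $\sqrt6\,TL\bigl(1+\sqrt3\tfrac Kk(1+\sqrt T)\bigr)+L\sqrt{3T}<1$ is strictly stronger than the hypothesis of Theorem~\ref{T311} (because $2<\sqrt6$, $\sqrt2<\sqrt3$, $\sqrt{2T}<\sqrt{3T}$), so for every $n=0,1,2,\dots$ the pair $(Y^n,A^n)$ is indeed the unique solution of the VRBDSDE with parameters $(f,g,X^n)$ and terminal $\xi^n$; in particular $Y^n$ is the fixed point of the associated contraction mapping and $A^n_t=\sup_{0\le v\le t^+}L^n_v$, with $L^n$ furnished by Theorems~\ref{T1} and \ref{T2}.

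Next I would subtract the two conditional--expectation representations and write, for $t\in[0,T]$,
\[
Y^n_t-Y^0_t=\E\Bigl\{\xi^n-\xi^0+\int_t^T\bigl[f(s,Y^n_s,A^n_s)-f(s,Y^0_s,A^0_s)\bigr]\,ds+\int_t^T\bigl[g(s,Y^n_s)-g(s,Y^0_s)\bigr]\,dB_s\ \Big|\ \mathcal{F}_t\Bigr\}.
\]
Bounding the right side by the sum of three conditional expectations produces three contributions. The first is simply $\|\xi^n-\xi^0\|_\infty$. For the drift term, assumptions $({\bf A2})$-(iii) and $({\bf A2})$-(iv) give the pointwise bound $|f(s,Y^n_s,A^n_s)-f(s,Y^0_s,A^0_s)|\le L|Y^n_s-Y^0_s|+K|A^n_s-A^0_s|$; integrating over $[t,T]$ and inserting Lemma~\ref{L331} for $|A^n_s-A^0_s|$ is precisely the step that produces the factor $K/k$ and the $m$-difference term, together with a further multiple of $\|Y^n-Y^0\|_\infty$. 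For the backward stochastic integral I would apply the conditional isometry used throughout the paper together with $({\bf A2})$-(iii), which contributes a term of order $L\sqrt T\,\|Y^n-Y^0\|_\infty$. Collecting the three pieces, using $(a+b+c)^2\le 3(a^2+b^2+c^2)$ (or Cauchy--Schwarz) to extract the overall $\sqrt3$, and then taking the supremum over $t\in[0,T]$ yields an inequality of the schematic form
\[
\|Y^n-Y^0\|_\infty\le\sqrt3\Bigl(\|\xi^n-\xi^0\|_\infty+\tfrac{\sqrt6\,TK}{k}\bigl\|\sup_{\mu\in[0,T]}\mbox{ess}\sup_{\tau>\mu}\tfrac1k|m^n_{\mu,\tau}-m^0_{\mu,\tau}|\bigr\|_\infty\Bigr)+\Bigl(\sqrt6\,TL\bigl(1+\sqrt3\tfrac Kk(1+\sqrt T)\bigr)+L\sqrt{3T}\Bigr)\|Y^n-Y^0\|_\infty ,
\]
and since the coefficient of $\|Y^n-Y^0\|_\infty$ on the right is, by hypothesis, strictly less than $1$, moving that term to the left and dividing gives exactly the asserted bound.

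The Lipschitz and isometry manipulations above are routine; the one genuinely delicate point is the circular coupling between $\|A^n-A^0\|_\infty$ and $\|Y^n-Y^0\|_\infty$ --- Lemma~\ref{L331} bounds the former in terms of the latter together with the $m$-data, while the displayed estimate bounds the latter in terms of the former --- which can be decoupled only thanks to the smallness assumption. I expect the main effort to lie in the careful bookkeeping of the numerical constants (the measurability and filtration subtleties being already settled in Theorems~\ref{T1}--\ref{T2}), not in any conceptual difficulty.
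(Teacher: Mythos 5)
Your plan follows the paper's own route exactly: subtract the two conditional-expectation representations, use the Lipschitz conditions and the conditional isometry to get an estimate of the form $|Y^n_t-Y^0_t|\leq \sqrt{3}\|\xi^n-\xi^0\|_{\infty}+\sqrt{3}L(\sqrt{2}T+\sqrt{T})\|Y^n-Y^0\|_{\infty}+\sqrt{6}TK\|A^n-A^0\|_{\infty}$ (the same "arguments as Theorem~\ref{T311}" the paper invokes), then insert Lemma~\ref{L331} and absorb the $\|Y^n-Y^0\|_{\infty}$ term using the smallness hypothesis. This is correct and essentially identical to the paper's proof, at a comparable level of detail.
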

\begin{proof}
Using the similar arguments as Theorem \ref{T311}, we obtain this estimation
\begin{eqnarray*}
|Y^n_t-Y^{0}_t|\leq \sqrt{3}\|\xi^n-\xi^0\|_{\infty}+\sqrt{3}L(\sqrt{2}T+\sqrt{T})\|Y^n-Y^0\|_{\infty}+\sqrt{6}TK\|A^n-A^0\|_{\infty},\label{eq19}
\end{eqnarray*}
which, together with Lemma \ref{L331}, proves the desired result.
\end{proof}

\begin{remark}
Let us emphasize that, since all the model study in section 6 of \cite{MW} have their stochastic counterpart, we can with no more difficulty establish respectively the stochastic version of recursive intertemporal utility minimization, optimal stopping problems. It suffice to follows the similar step as in \cite{MW} with some additional argument due to the presence of the backward stochastic integral with respect the Brownian motion $B$.
\end{remark}

{\bf\large Acknowledgement} The work was partially done while the second author visited
Shandong University. He would like to thank Prof. Shige Peng for providing a stimulating
working environment.

\setcounter{theorem}{0} \setcounter{equation}{0}

\label{lastpage-01}
\end{document}